\newtheoremstyle{normal}
{10pt}
{10pt}
{\normalfont}
{}
{\bfseries}
{}
{0.8em}
{\bfseries{\thmname{#1}\thmnumber{ #2}.\thmnote{ \hspace{0.5em}(#3)\newline}}}
\newtheoremstyle{kursiv}
{10pt}
{10pt}
{\itshape}
{}
{\bfseries}
{}
{0.8em}
{\bfseries{\thmname{#1}\thmnumber{ #2}.\thmnote{ \hspace{0.5em}(#3)\newline}}}
\theoremstyle{kursiv}
\newtheorem{definition}{Definition}[section]
\newtheorem{satz}[definition]{Theorem}
\newtheorem{lemma}[definition]{Lemma}
\newtheorem{korollar}[definition]{Corollary}
\newtheorem{proposition}[definition]{Proposition}
\theoremstyle{normal}
\newtheorem{bemerkung}[definition]{Remark}
\newtheorem{beispiel}[definition]{Example}
\newlength{\leftstackrelawd}
\newlength{\leftstackrelbwd}
\def\leftstackrel#1#2{\settowidth{\leftstackrelawd}%
{${{}^{#1}}$}\settowidth{\leftstackrelbwd}{$#2$}%
\addtolength{\leftstackrelawd}{-\leftstackrelbwd}%
\leavevmode\ifthenelse{\lengthtest{\leftstackrelawd>0pt}}%
{\kern-.5\leftstackrelawd}{}\mathrel{\mathop{#2}\limits^{#1}}}
\newcommand{\iR}{\mathbb{R}}
\newcommand{\R}{\mathbb{R}}
\newcommand{\iN}{\mathbb{N}}
\newcommand{\N}{\mathbb{N}}
\newcommand{\iZ}{\mathbb{Z}}
\newcommand{\Z}{\mathbb{Z}}
\DeclareMathOperator*{\dist}{dist}
\begin{document}
\pagenumbering{arabic}
\title{Li-Yau type and Harnack estimates for systems of reaction-diffusion equations via hybrid
curvature-dimension condition}
\author{Sebastian Kr\"ass}
\email{sebastian.kraess@uni-ulm.de}
\author{Rico Zacher$^*$}
\thanks{$^*$Corresponding author}
\email[Corresponding author:]{rico.zacher@uni-ulm.de}
\address[Sebastian Kr\"ass, Rico Zacher]{Institute of Applied Analysis, Ulm University, Helmholtzstra\ss{}e 18, 89081 Ulm, Germany.}
\thanks{Sebastian Kr\"ass is supported by a PhD-scholarship of the ``Hanns-Seidel-Stiftung'', Germany. }

\begin{abstract}
We prove Li-Yau and Harnack inequalities for systems of linear reaction-diffusion equations.
By introducing an additional discrete spatial variable, the system is rewritten as a scalar
diffusion equation with an operator sum. For such operators in a mixed continuous and discrete
setting, we introduce the hybrid curvature-dimension condition $CD_{hyb} (\kappa,d)$, which is a combination of the Bakry-Émery condition $CD(\kappa,d)$ and one of its discrete analogues, the condition $CD_\Upsilon (\kappa,d)$. We establish a hybrid tensorisation principle and prove that under
$CD_{hyb} (0,d)$ with $d<\infty$ a differential Harnack estimate of Li-Yau type holds, from which a Harnack inequality can 
be deduced by an integration argument.
\end{abstract}
\maketitle
\vspace{0.7em}
\noindent{\bf AMS subject classification (2020):} 35K57 (primary), 60J60 (secondary)

${}$

\noindent{\bf Keywords:} Li-Yau inequality, Harnack inequality, cooperative parabolic system, 
curvature-dimension condition, hybrid setting, tensorisation principle

\section{Introduction}
Harnack inequalities play an important role in the theory of partial differential equations,
geometric analysis and the theory of stochastic processes. They are well-known for large classes of elliptic and parabolic PDEs. In particular, nonnegative (smooth) solutions of the heat equation
\begin{equation} \label{heat1}
\partial_t u(t,x)-\Delta u(t,x)=0,\quad t>0,\;x\in \iR^n,
\end{equation}
satisfy the following Harnack inequality (see e.g.\ \cite{AUB,Moser64})
\begin{equation} \label{Har1}
u(t_1,x_1)\le u(t_2,x_2) \Big(\frac{t_2}{t_1}\Big)^{n/2} e^{\frac{|x_1-x_2|^2}{4(t_2-t_1)}},\quad 0<t_1<t_2,
\,x_1,x_2\in \iR^n,
\end{equation}
which is optimal; in fact, one has equality if one takes for $u$ the fundamental
solution of the heat equation. 

An important approach to derive Harnack estimates such as \eqref{Har1} is based on {\em differential Harnack inequalities} and
goes back to the fundamental contribution made by Li and
Yau (\cite{LY}), who were able to prove the Harnack inequality for Riemannian manifolds
with Ricci curvature bounded from below. In case of the heat equation on $\iR^n$, any positive 
smooth solution $u$ satisfies the Li-Yau inequality
\begin{equation} \label{LiYauklass}
- \Delta \log u \leq \frac{n}{2t}\quad \mbox{in}\;(0,\infty)\times \iR^n,
\end{equation}
which is equivalent to the differential Harnack inequality
\begin{equation} \label{CLY}
 |\nabla \log u|^2-\partial_t\log u\le \frac{n}{2t}\quad \mbox{in}\;(0,\infty)\times \iR^n,
\end{equation}
which in turn, by integration along a suitable path in space-time, yields the classical parabolic Harnack estimate \eqref{Har1}.

The main aim of this paper is to establish Li-Yau and Harnack inequalities for linear systems of reaction-diffusion
equations of the form
\begin{equation} \label{RDSystems}
\partial_t u_i (t,x) - \Delta u_i(t,x) = \sum_{j=1}^m k(i,j)\big(u_j(t,x)-u_i(t,x)\big), 
\quad t>0,\ x\in \iR^n,\ i = 1,\dots,m, 
\end{equation}
where $k$ is a given nonnegative kernel. If we consider $u_1,\ldots,u_m$ as concentrations of $m$ chemical
species $A_1,\,\ldots, A_m$ and assuming that $k$ is symmetric, the term $k(i,j) (u_j-u_i)$ can be interpreted
as the rate of change of the concentration $u_i$ due to the reversible chemical reaction 
$ \ce{A_i <=>A_j}$ with reaction rate $k(i,j)$ (in both directions), provided that $k(i,j)>0$. The simplest example
is given by the system
\begin{equation} \label{OneReacExample}
\left\{
\begin{array}{r@{\;=\;}l@{\;}l}
\partial_t u_1 (t,x) - \Delta u_1 (t,x) & u_2(t,x) -u_1 (t,x),\quad & t>0,\,x\in \iR^n, \\ 
 \partial_t u_2 (t,x) - \Delta u_2 (t,x) & u_1(t,x) -u_2 (t,x),\quad & t>0,\,x\in \iR^n,
\end{array}
\right.
\end{equation}
corresponding to just one reversible reaction $ \ce{A_1 <=>A_2}$ with reaction rate $1$ for both directions.

In general, systems do not allow for a maximum principle, so that one cannot expect to obtain Harnack inequalities, let alone Li-Yau type estimates. In case of \eqref{RDSystems}, the particular structure of the right-hand side, consisting of weighted
sums of differences $u_j-u_i$, suggests that it may be possible to derive pointwise estimates such as Harnack inequalities similar to the scalar case. In fact, \eqref{RDSystems} falls into
the class of {\em cooperative} parabolic systems, for which a Harnack inequality of the form
\begin{equation} \label{HarPol}
|u_i|_{L_\infty((\theta,2\theta)\times K)}\le C\inf_{[\frac{7}{2}\theta,4\theta]\times K} u_j,
\quad i,j\in \{1,\ldots,m\},
\end{equation}
has been established by F\"oldes and Pol\'a\^{c}ik in \cite[Theorem 3.9]{FoePol} under an irreducibility condition on $k$ for sufficiently smooth nonnegative 
solutions $u=(u_1,\ldots,u_m)$ on $(0,4\theta]\times \Omega$, where $\theta>0$, $K$
is a compact subset of the domain $\Omega$, and the constant $C$ depends on
$\theta, n, m$ and $K$, but not on $u$. Denoting the right-hand side of \eqref{RDSystems}
by $f_i(u)$, cooperativity means that $\frac{\partial f_i}{\partial u_j}(u)=k(i,j)\ge 0$ for all 
$i,j\in\{1,\ldots,m\}$ with $i\neq j$. We refer to \cite{FoePol} for more details on this notion
in a more general (nonlinear) setting.

Our approach to proving Li-Yau inequalities is based on the following idea. 
Considering the index $i\in \{1,\ldots,m\}$ as an additional spatial variable, which is
discrete (!), the system \eqref{RDSystems} can be viewed as a {\em scalar diffusion equation} of the form
\begin{equation} \label{Lequation}
 \partial_t u-\mathcal{L}u=0
\end{equation}
for the function
$u(t,x,i)$ defined by $u(t,x,i)=u_i(t,x)$ on $(0,\infty)\times \iR^n\times \{1,\ldots,m\}$, where the operator
$\mathcal{L}$ is the sum of the Laplacian $\Delta$ acting w.r.t.\ the continuous variable $x$ and a discrete
diffusion operator $L_d$ (given by the right-hand side in \eqref{RDSystems}) which acts w.r.t.\ the second
spatial variable $i$. This observation is the starting point for our study. Instead of the system \eqref{RDSystems},
we consider a scalar heat (or diffusion) equation with an operator sum $\mathcal{L}$ acting on a product structure in space, which is {\em hybrid} in the sense that we have a continuous and a discrete spatial variable.

To make its role as a second spatial variable clearer in the notation, we will from now on denote the discrete variable by $y$. Let $Y$ be a finite set (with at least two elements) and $k: Y\times Y\rightarrow [0,\infty)$. 
The operator $L_d$ acts on functions $w: Y\rightarrow \iR$ and is defined by
\begin{equation} \label{LdDef}
L_d w(y)=\sum_{z\in Y} k(y,z)\big(w(z)-w(y)\big),\quad y\in Y.
\end{equation}
Here the '$d$' in the notation refers to 'discrete' and should not be confused with any parameter $d$ later on. Observe that the values of $k(y,y)$ for $y\in Y$ do not play any role. We will always assume that 
$k(y,y)=0$ for all $y\in Y$. $L_d$ can be viewed as the graph Laplacian associated to the directed graph with
vertex set $Y$ where the edges are all pairs $(y,z)\in Y\times Y$ with $k(y,z)>0$ and $k(y,z)$ are the 
corresponding edge weights.

The operator $L_d$ is also the generator of a Markov chain on $Y$, where
$k(y,z)$ is the transition rate for jumping from $y$ to $z$ ($\neq y$). Note that in the theory of
Markov chains one usually writes the generator in the equivalent form $L_d w(y)=\sum_{z\in Y}k(y,z) w(z)$ with the kernel $k$ satisfying $\sum_{z\in Y}k(y,z)=0$ for all $y\in Y$, which
leads to the condition
$k(y,y)=-\sum_{z\in Y\setminus \{y\}}k(y,z)\le 0$, $y\in Y$. 

In order to derive Li-Yau and Harnack inequalities for positive solutions $u$ of the equation
\begin{equation} \label{DLdEquation}
\partial_t u(t,x,y)-\Delta u(t,x,y)-L_d u(t,x,y)=0,\quad t>0,\,x\in \iR^n,\,y\in Y,
\end{equation}
we want to use the approach of Li and Yau \cite{LY}. In the purely continuous setting such as the heat equation with
Laplace-Beltrami operator $\Delta$ on a complete Riemannian manifold $M$, a crucial role is played by the 
$\Gamma$-calculus of Bakry and \'Emery, a powerful theory at the heart of which is the {\em curvature-dimension condition} $CD(\kappa,n)$, see \cite{BGL,BE} and Definition \ref{CDDef} below. This condition takes its name from the fact that the Laplace-Beltrami operator $\Delta$ on a complete Riemannian manifold $M$ satisfies $CD(\kappa,n)$ if and only if the Ricci curvature of the manifold is bounded from below by $\kappa$ and $n$ is greater than or equal to the topological dimension of $M$. In case of the validity of $CD(0,n)$, positive solutions $u$ of the heat equation on $M$ satisfy the Li-Yau inequality \eqref{LiYauklass}
with $M$ instead of $\iR^n$. The generalisation of this result to Markov diffusion operators satisfying $CD(0,n)$ has been elaborated in \cite{BL} and can be also found in the monograph \cite{BGL}, which contains
many more applications of $CD$-conditions such as various types of functional inequalities.

In the discrete setting, various versions of the celebrated Li-Yau inequality have been proven during
the last decade (\cite{Harvard,DKZ,MUN}). The main problem here is the failure of the chain rule for the 
graph Laplacian (in particular when applied to $\log u$) so that one has to find a suitable discrete calculus and an appropriate $CD$-condition which make the Li-Yau approach work.
 
First results in this direction were obtained by Bauer, Horn, Lin, Lippner, Mangoubi and Yau \cite{Harvard}. They circumvent the problem of the failure of the chain rule by considering
the square root instead of the logarithm. Under the so-called {exponential
curvature-dimension inequality} $CDE(\kappa,n)$ they obtain Li-Yau and Harnack
inequalities for a wide class of locally finite graphs including Ricci-flat graphs. A few years later, M\"unch \cite{MUN} introduced a new calculus, called $\Gamma^\psi$-calculus, where $\psi$ is a concave function. Assuming that a finite graph satisfies the so-called $CD\psi(n,0)$-condition, M\"unch derives a Li-Yau estimate which for 
$\psi=\sqrt{}$ coincides with one of those obtained in \cite{Harvard} and which in case $\psi=\log$
yields the more natural logarithmic estimate 
$-L_d \log u \le \frac{n}{2t}$. M\"unch also showed that 
in some examples (e.g.\ Ricci-flat graphs) the choice $\psi=\log$ leads to better estimates than the square-root approach.
Recently, Dier, Kassmann and Zacher \cite{DKZ} proposed another $CD$-condition in the discrete setting, 
the condition $CD(F;0)$, which allows 
to significantly improve the Li-Yau estimates from \cite{Harvard,MUN}. For example, they were able to prove a sharp logarithmic
Li-Yau inequality for the unweighted two-point graph and a Li-Yau estimate for the lattice $\tau \iZ$ with grid size $\tau$
which for $\tau\to 0$ leads to the classical sharp Li-Yau inequality on $\iR$.

In the present paper we will use the condition $CD_\Upsilon (\kappa,d)$ for the discrete
part, see Definition \ref{CDPSIDEF} below. This condition was introduced by Weber and Zacher in \cite[Definition 2.7 and Remark 2.9]{WZ}, who also showed that in case $d=\infty$ it is equivalent to 
M\"unch's condition $CD\log(d,\kappa)$ mentioned in \cite[Remark 3.12]{MUN}; this equivalence
is also true for $d<\infty$, see Remark \ref{BemerkungMuench}.
The function $\Upsilon (x) = e^x -1-x$ plays a key role in this calculus, which is much inspired
by \cite{DKZ}. In fact, $CD_\Upsilon(0,d)$ is closely linked to the condition $CD(F;0)$ with a quadratic $CD$-function $F$. In the special case of $CD_\Upsilon(\kappa,\infty)$, there are further close connections to the generalized $\Gamma$-calculus described in Monmarch\'e \cite[Section 2]{Mon} and the notion of entropic Ricci curvature of Erbar and Maas in \cite{EM12}, see also \cite{Mie}. For more details we refer to \cite[Remark 2.9]{WZ}.

By combining elements of the Bakry-\'Emery condition $CD(\kappa,n)$ for the 
continuous part and those of the condition $CD_\Upsilon (\kappa,d)$ for the discrete
component we introduce a \emph{hybrid curvature-dimension condition} 
$CD_{hyb} (\kappa,d)$ for general operator sums $L_c \oplus L_d$ acting on 
(suitable) functions defined on the product space $X\times Y$, see Definition 
\ref{DefCDCond} below. Here $L_c$ is a Markov diffusion operator acting w.r.t.\ the (continuous) variable $x\in X$ and $L_d$ is as in \eqref{LdDef} and acts w.r.t.\ the discrete variable $y\in Y$. Our definition of $CD_{hyb}(\kappa,d)$ is based on ideas from
\cite[Section 7]{WZ}, where $CD_{hyb}(\kappa,\infty)$ with $\kappa>0$ appears implicitly 
in the proof of the modified logarithmic Sobolev inequality in the hybrid setting. 

The following {\em hybrid tensorisation principle} is of central importance for our analysis,
see Theorem \ref{SatzHinrCD} below.
If $L_c$ satisfies $CD(\kappa_1,n)$ and $L_d$ satisfies $CD_\Upsilon (\kappa_2,d)$ for some $\kappa_1,\kappa_2 \in \R$ and $n,d \in [1,\infty]$, then the operator $L_c \oplus L_d$ satisfies $CD_{hyb} (\kappa,\tilde{d})$ with $\kappa = \min \{\kappa_1,\kappa_2\}$ and $\tilde{d} = n+d$. Since $CD(0,n)$ holds for the Laplacian $L_c=\Delta$ on $\iR^n$, by tensorisation
we directly obtain $CD_{hyb} (0,\tilde{d})$ with some $\tilde{d}\in [n+1,\infty)$ for the sum $\mathcal{L}=\Delta \oplus L_d$ on $\iR^n \times Y$, provided that the discrete operator satisfies 
$CD_\Upsilon (0,d)$ with some $d\in [1,\infty)$. The latter property can be verified for 
various examples, an important class being given by Ricci-flat graphs (see Theorem \ref{SatzRicciflat}).

The condition $CD_{hyb} (0,d)$ with $d\in [1,\infty)$ is tailor-made for establishing
Li-Yau and differential Harnack estimates for positive smooth solutions $u$  to the equation $\partial_t u - \mathcal{L} u= 0$ on $(0,\infty) \times \R^n \times Y$. In fact, as one of
the main results of this paper (see Corollary \ref{KorollarLY}) we obtain that in this situation 
\begin{equation} \label{LiYauGlob}
- \mathcal{L} \log u = \big( \vert \nabla \log u \vert^2 + \Psi_\Upsilon (\log u)\big) - \partial_t \log u \leq \frac{d}{2t}\;\; \text{ on } (0,\infty) \times \R^n \times Y,
\end{equation}
where $\nabla$ acts w.r.t.\ $x$ and $\Psi_\Upsilon (\log u)$ is the discrete counterpart of $ \vert \nabla \log u \vert^2$, see \eqref{PsiUps} for its definition. To prove estimate \eqref{LiYauGlob}, we first show a related  Li-Yau inequality for solutions which are only
local w.r.t.\ the continuous variable and then take a limit by increasing the radius of the
underlying ball to infinity.

With the Li-Yau resp.\ differential Harnack estimates at hand, we are also able to derive
Harnack inequalities for positive smooth solutions $u$  to the equation $\partial_t u - \mathcal{L} u= 0$. Here we allow also for infinite graphs $Y$, but impose additional
conditions on the kernel $k$. We assume that $k(y_1,y_2)>0$ if and only if $k(y_2,y_1)>0$,
for all $y_1,y_2\in Y$. Moreover, it is assumed that there is a $k_{\min}>0$ such that all positive edge weights $k(y_1,y_2)$ are bounded from
below by  $k_{\min}$. Letting $u$ be a positive smooth function on $(0,\infty) \times \R^n \times Y$ and assuming validity of \eqref{LiYauGlob}, we can show that for any $x_1,x_2 \in \R^n$,
any $y_1,y_2\in Y$ and any $0 < t_1 < t_2 < \infty$ there holds
\begin{align}
u(t_1,x_1,y_1) \leq u(t_2,x_2,y_2) \Big(\frac{t_2}{t_1}\Big)^\frac{d}{2} \exp \Big( \frac{\vert x_2 - x_1 \vert^2}{4 (t_2-t_1)} + 2 \frac{\dist (y_1,y_2)^2}{k_{\min} (t_2-t_1)}\Big),
\label{HarGlob}
\end{align} 
see Corollary \ref{KorollarHarnack} below. We also have a corresponding result (Theorem \ref{SatzHarnack}) which applies to positive solutions which are only
local w.r.t.\ the continuous variable. 

In case of the example \eqref{OneReacExample}, to which our theory applies, the inequality \eqref{HarGlob} yields the 
single-species estimates
\begin{align}
u_i(t_1,x_1) \leq u_i(t_2,x_2) \Big(\frac{t_2}{t_1}\Big)^\frac{d}{2} e^{ \frac{\vert x_2 - x_1 \vert^2}{4 (t_2-t_1)}},\quad
i=1,2,
\label{OR1}
\end{align} 
and the two-species estimates
\begin{align}
u_i(t_1,x_1) \leq u_{3-i}(t_2,x_2) \Big(\frac{t_2}{t_1}\Big)^\frac{d}{2} \exp \Big( \frac{\vert x_2 - x_1 \vert^2}{4 (t_2-t_1)} + \frac{2}{t_2-t_1}\Big),\quad i=1,2,
\label{OR2}
\end{align} 
for $0 < t_1 < t_2 < \infty$ and $x_1,x_2 \in \R^n$, where $d\approx n+1.26$ is a constant which can be computed explicitly;
in particular the estimates are valid with $d=n+2$. See also Example \ref{Beispiele}(i). Observe that the single-species estimates \eqref{OR1} take
the same form as \eqref{Har1}, except for the increased exponent $d$. This is a general property, since the second
summand inside the brackets in \eqref{HarGlob} vanishes for $y_1=y_2$.

In this paper, for the sake of simplicity, we have restricted ourselves to the case of the Laplace operator on $\iR^n$ with respect to the continuous diffusion component. However, since the hybrid
tensorisation principle holds for general Markov diffusion operators, our results can be extended
to certain second-order parabolic equations on Riemannian manifolds with 
Ricci curvature bounded from below. In particular, in the case where $L_c$ is the Laplace-Beltrami operator our proofs of the Li-Yau and Harnack estimates can be adapted by using the
arguments from \cite[Chapter 12]{Li}.

This article is organised as follows. In Section 2, after recalling the conditions $CD(\kappa,d)$ and
$CD_\Upsilon(\kappa,d)$, we introduce the hybrid curvature-dimension condition $CD_{hyb} (0,d)$
and prove the hybrid tensorisation principle. We further discuss several examples where
the operator $\mathcal{L}=\Delta\oplus L_d$ satisfies $CD_{hyb} (0,d)$ with some finite $d\ge 1$. 
Section 3 is devoted to the derivation of Li-Yau estimates, which are then used in Section 4 to prove
Harnack inequalities. We also illustrate our results with a few examples of concrete reaction-diffusion systems. In the final Section 5, we discuss some open problems with regard to possible extensions
of our results in various directions.

\section{Curvature-dimension conditions}

In this section, we will introduce a new curvature-dimension condition which plays a crucial role in our proof of the Li-Yau inequality for positive solutions to the diffusion equation \eqref{DLdEquation} in the hybrid continuous-discrete setting. We will call this condition the \emph{hybrid curvature-dimension condition} as it combines already existing conditions from the continuous and the discrete setting. 

The starting point is the well known Bakry-Émery condition $CD(\kappa,n)$, which for nonnegative $\kappa$ (the curvature parameter) was shown to be sufficient for proving the Li-Yau estimate in the continuous setting for Markov diffusion operators, see \cite{BGL}. We recall this condition for the reader's convenience. Let $L_c$ be the generator of a Markov semigroup with a fixed invariant and reversible measure $\mu$
on the underlying state space $X$, e.g.\ a Riemannian manifold. The carr\'e du champ operator $\Gamma$ and the iterated
carr\'e du champ operator $\Gamma_2$ associated with $L_c$ are defined on a suitable algebra of functions as
\begin{align*}
\Gamma (u,v) & = \frac{1}{2} \big(L_c (uv) -u L_c v - v L_c u\big),\\
\Gamma_2 (u,v) & = \frac{1}{2}\big(L_c \Gamma(u,v)-\Gamma(u, L_c v)-\Gamma(L_c u,v)\big).
\end{align*}
We set $\Gamma (u) := \Gamma (u,u)$ and $\Gamma_2(u) := \Gamma_2(u,u)$. The Bakry-Émery condition is now defined as follows (see \cite[Definition 1.16.1]{BGL}).

\begin{definition} \label{CDDef}
The Markov generator $L_c$ is said to satisfy the condition $CD(\kappa,n)$ with $\kappa \in \R$ and $n \in [1,\infty]$, if for every function $f$ in a sufficiently rich class $\mathcal{A}$ there holds
\begin{align*}
\Gamma_2 (f) \geq \kappa \Gamma (f) + \frac{1}{n} (L_c f)^2,\quad \mu - \mbox{a.e.}
\end{align*}
\end{definition}

Motivated by second-order differential operators without zeroth order term, one says that the Markov generator $L_c$ satisfies the {\em diffusion property}
and calls $L_c$ a Markov diffusion operator if for any smooth function $\psi: \R \to \R$ and any suitable function $f$ we have
\begin{equation}
\label{DiffProp}
L_c \psi (f) = \psi^\prime (f) L_c(f) + \psi^{\prime \prime} (f) \Gamma(f),
\end{equation}
see also \cite[Definition 1.11.1]{BGL}. 

\begin{bemerkung}
An important example for a Markov diffusion operator is given by the classical Laplacian $\Delta$ on $\R^n$ with $\mu$ being the Lebesgue measure. Here
\begin{align*}
\Gamma (u,v) = \langle \nabla u, \nabla v \rangle \text{ and } \Gamma_2 (u) = \vert \nabla^2 u \vert_{HS}^2,
\end{align*}
and the diffusion property follows directly from the chain rule. It is well known that $\Delta$ satisfies $CD(0,n)$. Indeed,
\begin{align}
\Gamma_2(u) = \vert \nabla^2 u \vert^2_{HS} = 
\sum_{i=1}^n \sum_{j=1}^n \Big(\partial_{x_i} \partial_{x_j}u \Big)^2 \geq 
\sum_{i=1}^n \Big(\partial^2_{x_i}u  \Big)^2 \geq \frac{1}{n} \Big( \sum_{i=1}^n \partial^2_{x_i}u  \Big)^2 = \frac{1}{n} \big(\Delta u\big)^2.
\label{BakryCD(0,d)}
\end{align}
\end{bemerkung}

As mentioned before, the hybrid curvature-dimension condition also incorporates a discrete curvature-dimension condition, namely the condition $CD_\Upsilon (\kappa,d)$. This condition was introduced in \cite[Definition 2.7 and Remark 2.9]{WZ} in the discrete setting of Markov chains and is motivated by certain representation formulas for the first and second time derivative of the discrete Boltzmann entropy along the heat flow generated by the generator of the chain. Note that $CD_\Upsilon (\kappa,d)$ implies the Bakry-Émery condition $CD(\kappa,d)$; this was proved in \cite[Proposition 2.11]{WZ} only in the case $d=\infty$, but the argument given there easily extends to the general case. The examples in \cite[Section 5]{WZ} further show that $CD_\Upsilon (\kappa,d)$ is stronger
than $CD(\kappa,d)$. 

To describe the $CD_\Upsilon$ condition, let $Y$ be a finite discrete space and $L_d$ be as in 
\eqref{LdDef}. We define for a function $H: \R \to \R$ the two operators
\begin{align}
\Psi_H (u)(y) &= \sum_{z \in Y} k(y,z) H \big(u(z)-u(y)\big),\quad y\in Y,\label{PsiUps}\\
B_H (u,v)(y) &= \sum_{z \in Y} k(y,z) H \big(u(z)-u(y) \big) \big(v(z)-v(y)\big),\quad y\in Y,
\nonumber
\end{align}
acting on functions $u,\,v:Y\to \R$.
Instead of the carré du champ operators $\Gamma(u)$ and $\Gamma_2(u)$ one considers the discrete operators $\Psi_\Upsilon (u)$ and 
\begin{equation}
\Psi_{2,\Upsilon}(u) = \frac{1}{2} \big(L_d \Psi_\Upsilon (u) -B_{\Upsilon^\prime}(u,L_d u)\big),
\end{equation}
where $\Upsilon (x) = e^x -1-x, \ x \in \R$. Note that replacing $\Upsilon$ with the quadratic function $H(x)=\frac{1}{2}x^2$,
this calculus leads to the carré du champ operators $\Gamma(u)$ and $\Gamma_2(u)$, that is $\Psi_H(u)=\Gamma(u)$ and 
$\Psi_{2,\Upsilon}(u)$ becomes $\Gamma_2(u)$. 

\begin{definition} \label{CDPSIDEF}
The operator $L_d$ from \eqref{LdDef} is said to satisfy the curvature-dimension condition $CD_\Upsilon (\kappa,d)$ at $y \in Y$ with $\kappa \in \R$ and $d \in [1,\infty]$, if for any function $u:Y\to \R$ there holds
\begin{equation}
\Psi_{2,\Upsilon} (u) (y) \geq \kappa \Psi_{\Upsilon} (u)(y) + \frac{1}{d} \big(L_d u (y) \big)^2.
\label{CDUngl}
\end{equation}
If $L_d$ satisfies $CD_\Upsilon (\kappa,d)$ at every $y \in Y$, then we say that $L_d$ satisfies $CD_\Upsilon (\kappa,d)$.
\end{definition}

\begin{bemerkung}
Note that in \cite[Definition 2.3]{We} a weaker condition has been introduced under the same name. More precisely, this condition is only based on estimate (\ref{CDUngl}) when $-L_d u >0$. Whenever $-L_d u\leq 0$, positivity of $\Psi_{2,\Upsilon} (u)$ is sufficient. See also \cite[Remark 2.4 (i)]{We} for further details. 

\end{bemerkung}

The hybrid curvature-dimension condition combines the $\Gamma$-calculus with the discrete $\Upsilon$-calculus. We now
consider the heat flow on the product space $X\times Y$ generated by the operator $L_c \oplus L_d$ where the Markov diffusion operator $L_c$ acts w.r.t.\ the (continuous) variable $x\in X$ and $L_d$ (given by \eqref{LdDef}) acts w.r.t.\ the discrete variable $y\in Y$.
The natural analogue of the carré du champ operator in the hybrid case is given by the operator $\Gamma \oplus \Psi_\Upsilon$,
where $\Gamma$ is the carré du champ operator associated with $L_c$ and $\Psi_\Upsilon$ is the counterpart for the discrete
variable. The corresponding iterated version is then defined in the following way (cf.\ \cite[Definition 7.1]{WZ}).

\begin{definition}
\label{DefGamma2}
For sufficiently regular functions $u : X \times Y \to \R$ we define
\begin{equation}
\left(\Gamma \oplus \Psi_{\Upsilon} \right)_2 (u) = \frac{1}{2} \Big( (L_c \oplus L_d) \left(\Gamma \oplus \Psi_\Upsilon \right)(u)-2 \Gamma \big(u,(L_c \oplus L_d) (u)\big) - B_{\Upsilon^\prime} \big(u,(L_c \oplus L_d) (u)\big)\Big).
\end{equation}
\end{definition}

We are now able to formulate the hybrid curvature-dimension condition. Recall that $\mu$ denotes the fixed invariant
reversible measure on $X$ of the semigroup generated by $L_c$.

\begin{definition}
\label{DefCDCond}
We say that the operator $L_c \oplus L_d$ satisfies the condition $CD_{hyb} (\kappa,d)$ for some $\kappa \in \R$ and $d \in [1,\infty]$ if for any sufficiently regular function $u: X\times Y \to \R$ there holds
\begin{equation}
\left( \Gamma \oplus \Psi_{\Upsilon} \right)_2 (u) (x,y) \geq \kappa \big(\Gamma \oplus \Psi_\Upsilon \big)(u)(x,y) + \frac{1}{d} \big((L_c \oplus L_d) (u) (x,y) \big)^2,
\end{equation}
for $\mu$-a.a.\ $x\in X$ and all $y\in Y$.
\end{definition}

The subsequent crucial inequality has already been proven in \cite[Lemma 7.2]{WZ}.

\begin{lemma}
\label{LemmaCrucInequ}
For sufficiently regular functions $u: X \times Y \to \R$ there holds
\begin{align*}
(\Gamma \oplus \Psi_\Upsilon)_2 (u) \geq (\Gamma_2 \oplus \Psi_{2,\Upsilon} )(u).
\end{align*}
\end{lemma}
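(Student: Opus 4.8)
The idea is to expand both sides of the asserted inequality in terms of the individual continuous and discrete operators and to identify the discrepancy as a manifestly nonnegative term. Writing $\mathcal{L} = L_c \oplus L_d$ and recalling that $\Gamma \oplus \Psi_\Upsilon$ by definition means $\Gamma$ applied in the $x$-variable plus $\Psi_\Upsilon$ applied in the $y$-variable, the first step is to substitute $\mathcal{L} u = L_c u + L_d u$ into Definition \ref{DefGamma2} and to split every term according to which variable the outer operator acts on. Using that $L_c$ and $L_d$ commute (they act on different variables) and that $\Gamma$ is bilinear and acts only in $x$ while $B_{\Upsilon'}$ acts only in $y$, one reorganises $(\Gamma \oplus \Psi_\Upsilon)_2(u)$ into the sum of three pieces: a ``pure continuous'' piece, which should turn out to be exactly $\Gamma_2(u)$ (with $\Gamma_2$ computed in $x$, $y$ fixed); a ``pure discrete'' piece, which should turn out to be exactly $\Psi_{2,\Upsilon}(u)$ (computed in $y$, $x$ fixed); and a genuinely mixed remainder coming from cross terms such as $L_c \Psi_\Upsilon(u)$, $L_d \Gamma(u)$, $\Gamma(u, L_d u)$ and $B_{\Upsilon'}(u, L_c u)$.

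The second step is to handle the cross terms. Here the diffusion property \eqref{DiffProp} of $L_c$ is the key tool: since $\Psi_\Upsilon(u)(x,y) = \sum_z k(y,z)\,\Upsilon\big(u(x,z)-u(x,y)\big)$, applying $L_c$ (which acts in $x$) and using \eqref{DiffProp} with $\psi = \Upsilon$ to each summand produces terms involving $\Upsilon'\big(u(x,z)-u(x,y)\big)$ and $\Upsilon''\big(u(x,z)-u(x,y)\big)$ times $\Gamma$-expressions in $x$. One then checks that the $\Upsilon'$-terms cancel against $\tfrac12 B_{\Upsilon'}(u, L_c u)$ and the remaining $\Upsilon''$-terms, together with $L_d \Gamma(u)$ and $-\Gamma(u, L_d u)$, assemble into a sum of the shape $\sum_z k(y,z)\,\Upsilon''\big(u(x,z)-u(x,y)\big)\,\Gamma\big(u(x,z)-u(x,y)\big)$, i.e.\ $B_{\Upsilon''}(u,u)$ where $\Gamma$ is taken in the $x$-slot. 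Since $\Upsilon''(x) = e^x > 0$, $k \ge 0$, and $\Gamma(v) \ge 0$ for the Markov diffusion operator $L_c$, this leftover term is nonnegative, which is exactly the inequality $(\Gamma \oplus \Psi_\Upsilon)_2(u) \ge (\Gamma_2 \oplus \Psi_{2,\Upsilon})(u)$ to be proven.

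The third step is bookkeeping: one must verify that the ``pure'' pieces really do reproduce $\Gamma_2$ and $\Psi_{2,\Upsilon}$ with no spurious extra contributions. For the continuous part this is immediate because $\mathcal{L}$ restricted to the $x$-dependence is just $L_c$ and $\Gamma \oplus \Psi_\Upsilon$ restricted to its $\Gamma$-part is just $\Gamma$, so the pure-$x$ piece of the right-hand side of Definition \ref{DefGamma2} is literally $\tfrac12\big(L_c\Gamma(u) - 2\Gamma(u,L_c u)\big) = \Gamma_2(u)$ (the $B_{\Upsilon'}$ term contributes nothing here since it acts in $y$). For the discrete part one similarly collects $\tfrac12\big(L_d\Psi_\Upsilon(u) - B_{\Upsilon'}(u,L_d u)\big) = \Psi_{2,\Upsilon}(u)$, noting that the factor $2$ in front of $\Gamma\big(u,\mathcal{L}u\big)$ in Definition \ref{DefGamma2} is precisely what is needed so that the $\Gamma$-part carries the coefficient $1$ matching $\Gamma_2$ while the $B_{\Upsilon'}$-part carries the coefficient $\tfrac12$ matching $\Psi_{2,\Upsilon}$.

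\textbf{Main obstacle.} The one place where care is genuinely required is the cross-term computation in the second step: one has to apply the diffusion property \eqref{DiffProp} inside the sum defining $L_c\Psi_\Upsilon(u)$, keep track of the arguments $u(\cdot,z)-u(\cdot,y)$ (so that $\Gamma$ is applied to the \emph{difference}, not to $u$ itself), and see the $\Upsilon'$-contributions cancel exactly against $B_{\Upsilon'}(u,L_c u)$ and the $L_d$-part of $\Gamma(u,\mathcal{L}u)$. This is a finite but slightly delicate algebraic identity; once it is in place, the nonnegativity of the residual $B_{\Upsilon''}$-term is immediate and the lemma follows. Since this is Lemma 7.2 of \cite{WZ}, one may alternatively simply cite that reference, but the computation above is short enough to reproduce.
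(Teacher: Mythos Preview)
Your proof plan is correct and is precisely the natural computation. The paper itself does not give a proof but simply cites \cite[Lemma 7.2]{WZ}; the argument there is the one you describe: expand $(\Gamma\oplus\Psi_\Upsilon)_2(u)$ via Definition~\ref{DefGamma2}, split off the pure pieces $\Gamma_2(u)$ and $\Psi_{2,\Upsilon}(u)$, and verify that the mixed remainder is a nonnegative sum.

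One small imprecision in your second step: after the $\Upsilon'$-cancellation you do \emph{not} obtain a single term of the form $\sum_z k(y,z)\,\Upsilon''(\cdots)\,\Gamma\big(u(\cdot,z)-u(\cdot,y)\big)$. Rather, the terms $L_d\Gamma(u)-2\Gamma(u,L_d u)$ (note the coefficient $2$ in front of $\Gamma$, which you correctly flag elsewhere) give by bilinearity
\[
\sum_{z\in Y} k(y,z)\,\Gamma\big(u(\cdot,z)-u(\cdot,y)\big),
\]
without any $\Upsilon''$-factor, and this sits alongside the $\Upsilon''$-term coming from $L_c\Psi_\Upsilon(u)$. The full cross remainder is therefore
\[
\sum_{z\in Y} k(y,z)\,\big(1+\Upsilon''(u(x,z)-u(x,y))\big)\,\Gamma\big(u(\cdot,z)-u(\cdot,y)\big)(x),
\]
which is of course still $\ge 0$ since $k\ge 0$, $1+\Upsilon''=1+e^{(\cdot)}>0$, and $\Gamma(v)\ge 0$. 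So the conclusion is unaffected, but the precise algebraic form of the residual differs slightly from what you wrote.
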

%
This lemma is the key ingredient in the proof of the following highly important {\em hybrid tensorisation principle}.

\begin{satz}
\label{SatzHinrCD}
Let $L_c$ satisfy $CD(\kappa_1,n)$ and $L_d$ satisfy $CD_\Upsilon (\kappa_2,d)$ for some $\kappa_1,\kappa_2 \in \R$ and $n,d \in [1,\infty]$. Then the operator $L_c \oplus L_d$ satisfies $CD_{hyb} (\kappa,\tilde{d})$ with $\kappa = \min \{\kappa_1,\kappa_2\}$ and $\tilde{d} = n+d$. 
\end{satz}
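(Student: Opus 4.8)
The plan is to combine Lemma~\ref{LemmaCrucInequ} with a pointwise decomposition of the relevant quantities into their continuous and discrete parts, and then apply the two given curvature-dimension hypotheses separately. First I would fix a sufficiently regular $u:X\times Y\to\R$, a point $x\in X$ (outside a $\mu$-null set) and a point $y\in Y$. By Lemma~\ref{LemmaCrucInequ}, it suffices to bound $(\Gamma_2\oplus\Psi_{2,\Upsilon})(u)(x,y)$ from below; by definition this equals $\Gamma_2(u(\cdot,y))(x)+\Psi_{2,\Upsilon}(u(x,\cdot))(y)$, where in the first summand the operator $\Gamma_2$ acts only in the $x$-variable (with $y$ frozen) and in the second summand $\Psi_{2,\Upsilon}$ acts only in the $y$-variable (with $x$ frozen). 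This is where the fact that $L_c$ and $L_d$ act on different variables, and hence commute and split the carr\'e du champ additively, gets used: $(\Gamma\oplus\Psi_\Upsilon)(u)=\Gamma(u)+\Psi_\Upsilon(u)$ with the same variable-freezing convention, and $(L_c\oplus L_d)u = L_c u + L_d u$.

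Next I would invoke the hypotheses. Since $L_c$ satisfies $CD(\kappa_1,n)$, applied to the function $x\mapsto u(x,y)$ we get $\Gamma_2(u(\cdot,y))(x)\ge \kappa_1\,\Gamma(u(\cdot,y))(x)+\tfrac1n (L_c u)^2(x,y)$; since $L_d$ satisfies $CD_\Upsilon(\kappa_2,d)$ at $y$, applied to $z\mapsto u(x,z)$ we get $\Psi_{2,\Upsilon}(u(x,\cdot))(y)\ge \kappa_2\,\Psi_\Upsilon(u(x,\cdot))(y)+\tfrac1d (L_d u)^2(x,y)$. Adding these and using $\kappa_i\ge\kappa=\min\{\kappa_1,\kappa_2\}$ together with the nonnegativity of $\Gamma(u)$ and $\Psi_\Upsilon(u)$ (the latter because $\Upsilon\ge 0$ and $k\ge 0$), the curvature terms combine to at least $\kappa\big(\Gamma(u)+\Psi_\Upsilon(u)\big)=\kappa(\Gamma\oplus\Psi_\Upsilon)(u)$.

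It remains to handle the dimension terms: we must show $\tfrac1n (L_c u)^2+\tfrac1d(L_d u)^2\ge \tfrac1{n+d}\big(L_c u+L_d u\big)^2$ pointwise. This is the elementary Cauchy--Schwarz (or convexity) inequality $\tfrac{a^2}{n}+\tfrac{b^2}{d}\ge\tfrac{(a+b)^2}{n+d}$ for $a,b\in\R$ and $n,d\in(0,\infty]$, with the usual conventions when $n$ or $d$ is infinite (the corresponding summand simply dropping out, and $\tilde d=n+d=\infty$ making the claim trivial in that case, except one should note that if exactly one of $n,d$ is infinite the inequality still reads correctly as $\tfrac{b^2}{d}\ge\tfrac{(a+b)^2}{\infty}=0$ is too weak, so I would instead keep $\tfrac1n(L_cu)^2$ which is already $\ge\tfrac1{n+d}(L_cu+L_du)^2$ only when... ). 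Here lies the one genuine subtlety: when $n=\infty$ but $d<\infty$ (or vice versa) one must check the statement is still consistent, and indeed $\tilde d=n+d=\infty$ so the dimension term on the right vanishes and nothing is to prove; the only case needing the real inequality is $n,d<\infty$, where $\tfrac{a^2}{n}+\tfrac{b^2}{d}-\tfrac{(a+b)^2}{n+d}=\tfrac{(d\,a-n\,b)^2}{nd(n+d)}\ge0$ settles it. Assembling the curvature and dimension bounds gives exactly $(\Gamma\oplus\Psi_\Upsilon)_2(u)(x,y)\ge \kappa(\Gamma\oplus\Psi_\Upsilon)(u)(x,y)+\tfrac1{\tilde d}\big((L_c\oplus L_d)u\big)^2(x,y)$ for $\mu$-a.a.\ $x$ and all $y$, which is $CD_{hyb}(\kappa,\tilde d)$. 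The main obstacle is not any of these steps individually but making the additive splitting of $\Gamma_2\oplus\Psi_{2,\Upsilon}$ and of the carr\'e du champ rigorous under the ``sufficiently regular'' hypothesis and under the convention that $L_c,L_d$ act on disjoint variable blocks; once Lemma~\ref{LemmaCrucInequ} is granted, the rest is the two hypotheses plus the scalar inequality above.
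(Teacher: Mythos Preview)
Your proposal is correct and follows essentially the same route as the paper: invoke Lemma~\ref{LemmaCrucInequ} to reduce to $\Gamma_2+\Psi_{2,\Upsilon}$, apply the two CD hypotheses separately, use nonnegativity of $\Gamma$ and $\Psi_\Upsilon$ to pass to $\kappa=\min\{\kappa_1,\kappa_2\}$, and finish with the scalar inequality $\tfrac{a^2}{n}+\tfrac{b^2}{d}\ge\tfrac{(a+b)^2}{n+d}$. Your extra care about the $n=\infty$ or $d=\infty$ cases is more than the paper spells out, but the conclusion is the same (the dimension term on the right simply vanishes).
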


\begin{proof}
By Lemma \ref{LemmaCrucInequ} and positivity of $\Gamma$ and $\Psi_\Upsilon$, we have
\begin{align*}
\left(\Gamma \oplus \Psi_{\Upsilon} \right)_2 (u) & \geq \Gamma_2 (u) + \Psi_{2,\Upsilon} (u) \geq \kappa_1 \Gamma (u) + \frac{1}{n} \big(L_c u \big)^2 + \kappa_2 \Psi_\Upsilon (u) + \frac{1}{d} \big(L_d u\big)^2 \\ & \geq \kappa (\Gamma \oplus \Psi_\Upsilon) (u) +\frac{1}{n} \big(L_c u \big)^2+\frac{1}{d} \big(L_d u\big)^2.
\end{align*}
Finally, using the elementary inequality $\frac{1}{c_1} x^2 +\frac{1}{c_2} y^2 \geq \frac{1}{c_1+c_2} (x+y)^2$ for $x,y \in \R$ and $c_1,c_2 >0$, we find that
\begin{align*}
\left(\Gamma \oplus \Psi_{\Upsilon} \right)_2 (u) \geq \kappa (\Gamma \oplus \Psi_\Upsilon) (u) + \frac{1}{n+d} \big(L_c u + L_d u)\big)^2 .
\end{align*}
\end{proof}

\begin{bemerkung}
Theorem \ref{SatzHinrCD} with $n=d=\infty$ was implicitely used in \cite[Theorem 7.3]{WZ} in order to derive a modified logarithmic Sobolev inequality in the hybrid setting. Indeed, the authors assumed $L_c$ and $L_d$ to satisfy $CD(\kappa_c, \infty)$ and $CD_\Upsilon (\kappa_d,\infty)$, respectively, and showed the inequality 
\begin{align*}
(\Gamma \oplus \Psi_\Upsilon)_2 (u) \geq \min \{\kappa_c,\kappa_d\} (\Gamma \oplus \Psi_\Upsilon)(u),
\end{align*}
which is exactly the condition $CD_{hyb} (\kappa,\infty)$ with $\kappa = \min \{\kappa_c,\kappa_d\}$.
\end{bemerkung}

\begin{korollar}
\label{KorollarCD}
Let $X = \R^n$. If $L_d$ satisfies $CD_\Upsilon (\kappa,d)$ with some $\kappa \geq 0$ and $d \in [1,\infty]$, then the operator $\mathcal{L} = \Delta \oplus L_d$ satisfies $CD_{hyb} (0,n+d)$.
\end{korollar}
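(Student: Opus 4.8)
The plan is to obtain this as an immediate consequence of the hybrid tensorisation principle (Theorem \ref{SatzHinrCD}) combined with the fact, recorded in the Remark following Definition \ref{CDDef}, that the classical Laplacian $\Delta$ on $\R^n$ satisfies the Bakry-Émery condition $CD(0,n)$. Indeed, the computation \eqref{BakryCD(0,d)} already establishes $\Gamma_2(u) = \vert \nabla^2 u \vert_{HS}^2 \geq \frac{1}{n} (\Delta u)^2$ by applying the Cauchy-Schwarz inequality to the trace of the Hessian, which is precisely $CD(\kappa_1, n)$ for $L_c = \Delta$ with curvature parameter $\kappa_1 = 0$.

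First I would fix $X = \R^n$ with $\mu$ the Lebesgue measure and take $L_c = \Delta$, so that $L_c$ satisfies $CD(\kappa_1, n)$ with $\kappa_1 = 0$. By hypothesis, $L_d$ from \eqref{LdDef} satisfies $CD_\Upsilon(\kappa_2, d)$ with $\kappa_2 = \kappa \geq 0$ and $d \in [1,\infty]$. Applying Theorem \ref{SatzHinrCD} to the pair $(L_c, L_d)$ then gives that $\mathcal{L} = L_c \oplus L_d = \Delta \oplus L_d$ satisfies $CD_{hyb}(\min\{\kappa_1,\kappa_2\}, n+d)$. Since $\kappa \geq 0$, we have $\min\{0,\kappa\} = 0$, and hence $\mathcal{L}$ satisfies $CD_{hyb}(0, n+d)$, which is the assertion.

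The corollary itself involves no genuine obstacle; all of the substantive work is already done, namely in Lemma \ref{LemmaCrucInequ} (the inequality $(\Gamma \oplus \Psi_\Upsilon)_2(u) \geq (\Gamma_2 \oplus \Psi_{2,\Upsilon})(u)$, borrowed from \cite[Lemma 7.2]{WZ}) and in the elementary superadditivity estimate $\frac{1}{c_1}x^2 + \frac{1}{c_2}y^2 \geq \frac{1}{c_1+c_2}(x+y)^2$ used at the end of the proof of Theorem \ref{SatzHinrCD}. The only points worth keeping in mind when writing this up are that the tensorisation is applied with the correct constants $c_1 = n$ and $c_2 = d$, and that the borderline case $d = \infty$ is consistent: there $\frac{1}{d} = 0$, the dimension term coming from $L_d$ drops out, and $\tilde d = n + \infty = \infty$, so $CD_{hyb}(0,\infty)$ results, in agreement with the formula $\tilde d = n + d$.
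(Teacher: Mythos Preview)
Your argument is correct and coincides with the paper's own proof, which simply says the claim follows directly from Theorem \ref{SatzHinrCD} since $\Delta$ satisfies $CD(0,n)$. Your additional remarks on the case $d=\infty$ and on where the real work is done (Lemma \ref{LemmaCrucInequ} and the elementary superadditivity inequality) are accurate elaborations but not needed for the proof itself.
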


\begin{proof}
This follows directly from Theorem \ref{SatzHinrCD} as the operator $\Delta$ satisfies $CD(0,n)$.
\end{proof}

\begin{bemerkung} \label{infiniteY}
Since our original interest lies in systems of finitely many reaction-diffusion equations, we restricted
the above presentation to the case of finite discrete spaces $Y$.
Note however that the definitions of the conditions $CD_\Upsilon (\kappa,d)$ and  $CD_{hyb} (\kappa,d)$
can be generalised to countable sets $Y$ by imposing appropriate conditions on the admissible
functions that ensure that the involved terms are finite, see e.g.\ \cite[Section 2]{WZ} for the condition $CD_\Upsilon (\kappa,d)$. Theorem \ref{SatzHinrCD} and Corollary \ref{KorollarCD} then easily
extend to this more general situation, which will be admitted later in the context of Ricci-flat graphs, see Theorem \ref{SatzRicciflat}. 
\end{bemerkung}

From now on, we will restrict ourselves to the important special case $L_c = \Delta$ on $X=\R^n$ (with $\mu$ being the Lebesgue measure). We will discuss several examples of operators $L_d$ (respectively graphs) which satisfy $CD_\Upsilon (0,d)$ with some $d\in [1,\infty)$. Corollary \ref{KorollarCD} then immediately shows that
 $\mathcal{L} = \Delta \oplus L_d$ satisfies the hybrid condition $CD_{hyb} (0,n+d)$.

\begin{beispiel}
\label{2-pkteGraph}
Let us consider the example from the introduction, now with an additional (symmetric) weight $\eta>0$, i.e.
\begin{equation} \label{firstexample}
\left\{
\begin{array}{r@{\;=\;}l@{\;}l}
\partial_t u_1 (t,x) - \Delta u_1 (t,x) & \eta\big(u_2(t,x) -u_1 (t,x)\big),\quad & t>0,\,x\in \iR^n, \\ 
 \partial_t u_2 (t,x) - \Delta u_2 (t,x) & \eta\big(u_1(t,x) -u_2 (t,x)\big),\quad & t>0,\,x\in \iR^n.
\end{array}
\right.
\end{equation}
Fix a two-element set $Y = \{y_1,y_2\}$ and set $u(t,x,y_1) = u_1(t,x)$ and $u(t,x,y_2) = u_2(t,x)$. 
Then, as already explained in the introduction, the system \eqref{firstexample} is equivalent to 
\begin{align*}
\partial_t u(t,x,y) - \Delta u(t,x,y) = L_d (t,x,y), \ (t,x,y) \in (0,\infty) \times \R^n \times Y,
\end{align*}
where $L_d$ is the discrete Laplacian \eqref{LdDef} on $Y$ generated by the kernel given by $k(y_1,y_2)=k(y_2,y_1)=\eta$. Setting $\tilde{y_1}=y_2$ and $\tilde{y_2}=y_1$, we find for $u: Y\to \R$ and $y\in Y$ that
\begin{align*}
2 \Psi_{2,\Upsilon} (u)(y) &= L_d \Psi_\Upsilon (u)(y) - B_{\Upsilon^\prime} (u,Lu)(y) \\ &= \eta \Big(\Psi_\Upsilon (u)(\tilde{y}) - \Psi_\Upsilon (u) (y)-\Upsilon^\prime(u(\tilde{y})-u(y)\big) \big(Lu(\tilde{y})-Lu(y)\big)\Big) \\ &= \eta^2 \Big( \Upsilon \big(u(y)-u(\tilde{y})\big) - \Upsilon\big(u(\tilde{y})-u(y)\big)-2\Upsilon^\prime(u(\tilde{y})-u(y)\big) (u(y)-u(\tilde{y}))\Big) \\&= \eta^2 \big(\Upsilon (-a) - \Upsilon (a) +2a\Upsilon^\prime(a)\big),
\end{align*}
with $a = u(\tilde{y})-u(y)$. Now, 
\begin{align*}
\Upsilon (-a)+\Upsilon (a) = e^{-a} +e^a-2 = \sum_{k=0}^\infty \frac{(-a)^k+a^k}{k!}-2 = a^2 + \sum_{k=3}^\infty \frac{(-a)^k+a^k}{k!} \geq a^2
\end{align*}
and by convexity of the exponential function
\begin{align*}
a\Upsilon^\prime (a) -\Upsilon (a) = e^a(a-1)+1 \geq -1+1 =0.
\end{align*}
Thus 
\begin{align*}
\Psi_{2,\Upsilon} (u)(y) = \frac{\eta^2}{2} \Big(\Upsilon (a)+\Upsilon(-a) +2\big(a\Upsilon^\prime(a)-\Upsilon(a)\big)\Big) \geq \frac{\eta^2}{2} a^2 = \frac{1}{2} \big(L_d u(y)\big)^2,
\end{align*}
which shows that $L_d$ satisfies $CD_\Upsilon (0,2)$. Together with Corollary \ref{KorollarCD}, we conclude that the operator $\mathcal{L} = \Delta \oplus L_d$ satisfies $CD_{hyb} (0,d)$ with $\displaystyle d = n+2$.
\end{beispiel}

As we will see in the following, the two-parameter family of functions 
\begin{equation}
\nu_{r,s} (w) = r \Upsilon^\prime (w)w + \Upsilon (-w) -s \Upsilon (w), \ w \in \R,
\label{DefNu}
\end{equation}
with constants $r,s \in \R$ appears in many examples at a crucial point when proving the validity of $CD_\Upsilon (0,d)$ with finite $d>0$ for some operator $L_d$. This observation has already been made in \cite{We}, where the author discusses several examples regarding the more general condition $CD_\Upsilon (0,F)$ with some $CD$-function $F$. There, also a few properties of the functions $\nu_{r,s}$ have been collected in the appendix. We do not repeat them here but add another property that has main importance for our calculations.

\begin{lemma}
\label{TechLemma}
Let $r \geq 0$. Then there holds
\begin{align*}
\nu_{r,r-1} (w) \geq C(r) w^2,\quad w\in \R,
\end{align*}
with $C(r): = \inf_{w \in \R} \frac{\nu_{r,r-1} (w)}{w^2} \geq 1$. Moreover, $C(r)$ is nondecreasing in $[0,\infty)$.
\end{lemma}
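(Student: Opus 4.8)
The plan is to reduce the statement to a transparent decomposition of $\nu_{r,r-1}$ into two nonnegative pieces. First I would substitute $\Upsilon(x)=e^{x}-1-x$ and $\Upsilon'(x)=e^{x}-1$ into the definition \eqref{DefNu}; a short computation should give the closed form
\[
\nu_{r,r-1}(w) = (rw-r+1)e^{w} + e^{-w} + r - 2, \qquad w\in\R .
\]
The key observation I would then record is the identity
\[
\nu_{r,r-1}(w) = \nu_{0,-1}(w) + r\,p(w), \qquad \text{where } \nu_{0,-1}(w)=e^{w}+e^{-w}-2 \ \text{ and }\ p(w):=(w-1)e^{w}+1 .
\]

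Next I would treat the two summands separately. For the first, the power series of $\cosh$ yields
\[
\nu_{0,-1}(w) = 2(\cosh w - 1) = w^{2} + 2\sum_{k\ge 2}\frac{w^{2k}}{(2k)!} \ \ge\ w^{2},
\]
and moreover $\nu_{0,-1}(w)/w^{2}$ is strictly increasing in $|w|$ with limit $1$ as $w\to 0$, so that already $C(0)=1$. For the second, I would note that $p(0)=0$ and $p'(w)=we^{w}$, hence $p$ is strictly decreasing on $(-\infty,0)$ and strictly increasing on $(0,\infty)$; consequently $w=0$ is the global minimum of $p$ and $p(w)\ge 0$ for every $w\in\R$.

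With these two facts the main estimate is immediate: for $r\ge 0$ and $w\ne 0$,
\[
\frac{\nu_{r,r-1}(w)}{w^{2}} = \frac{\nu_{0,-1}(w)}{w^{2}} + r\,\frac{p(w)}{w^{2}} \ \ge\ 1 ,
\]
so $C(r)\ge 1$. That $C(r)$ is a well-defined finite number is then routine: the removable singularity at $w=0$ has value $1+\frac{r}{2}$ (read off from the closed form via Taylor expansion) and the ratio tends to $+\infty$ as $w\to\pm\infty$, so that $1\le C(r)\le 1+\frac{r}{2}$. Finally, for each fixed $w\ne 0$ the map $r\mapsto \dfrac{\nu_{0,-1}(w)}{w^{2}}+r\,\dfrac{p(w)}{w^{2}}$ is nondecreasing on $[0,\infty)$ since $p(w)\ge 0$, and a pointwise infimum of nondecreasing functions is nondecreasing; hence $C$ is nondecreasing on $[0,\infty)$.

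The only genuine obstacle is spotting the decomposition $\nu_{r,r-1}=\nu_{0,-1}+r\,p$; once it is in hand, everything rests on the elementary positivity of $p$ and the series lower bound for $\nu_{0,-1}$. A more naive route — trying to prove convexity of $w\mapsto \nu_{r,r-1}(w)-w^{2}$ (which has vanishing value and derivative at $0$) — does not work, since its second derivative $(rw+r+1)e^{w}+e^{-w}-2$ can be negative for large $r$ on $(-\infty,-1)$; the splitting above sidesteps this entirely.
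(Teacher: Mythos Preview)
Your proof is correct and follows essentially the same approach as the paper: your decomposition $\nu_{r,r-1}=\nu_{0,-1}+r\,p$ is precisely the paper's splitting $\nu_{r,r-1}(w)=\big(\Upsilon(-w)+\Upsilon(w)\big)+r\big(\Upsilon'(w)w-\Upsilon(w)\big)$ written in exponential form, and the two ingredients you use---the cosh series bound $\Upsilon(-w)+\Upsilon(w)\ge w^{2}$ and the nonnegativity of $p(w)=\Upsilon'(w)w-\Upsilon(w)$---are exactly those invoked (via Example~\ref{2-pkteGraph}) in the paper. The only difference is cosmetic: the paper obtains $p\ge 0$ from convexity of the exponential, whereas you argue via $p'(w)=we^{w}$; both are equivalent one-line observations.
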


\begin{proof}
Similarly to the calculations in Example \ref{2-pkteGraph}, we find
\begin{align*}
\nu_{r,r-1} (w) = r\Upsilon^\prime(w)w +\Upsilon(-w)+\Upsilon(w)-r\Upsilon(w) \geq w^2.
\end{align*}
The second statement follows from the fact that $\Upsilon^\prime(w)w-\Upsilon(w)\ge 0$ for all $w\in \iR$.
\end{proof}

\begin{bemerkung}
\label{BemerkungMuench}
The function $\nu_{2,1}$ already appeared in M\"unch \cite{MUN} in a rather implicit way and was used to prove the validity of the condition $CD\psi (d,\kappa)$ with $\psi=\log$ on Ricci-flat graphs. To see this, note that the condition $CD_\Upsilon (\kappa,d)$ is equivalent to the condition $CD{\log} (d,\kappa)$ of M\"unch on unweighted graphs. This has been shown in \cite[Section 9]{WZ} for $d= \infty$ and can be easily generalized to the case $d<\infty$ as $L_d (\log u) = \Delta^{\log} u$, where $\Delta^{\log}$ is the nonlinear Laplace operator appearing in the dimension term in the $CD{\log}$ condition introduced in \cite{MUN}. For concave functions
$\psi\in C^1((0,\infty))$ Münch then considers the inequality
\begin{align}
\tilde{\psi} (x,y) &= \big(\psi^\prime (x) + \psi^\prime (y)\big)(1-xy) +y \Big(\psi(x)-\psi \big(\frac{1}{y}\big)\Big)+x\Big(\psi(y)-\psi \big(\frac{1}{x}\big)\Big) \notag\\ &\geq C_\psi \big( \psi(x) +\psi(y) -2\psi(1)\big)^2,\quad x,y>0,
\label{EstimateMuench}
\end{align}
where $C_{\psi} = \inf_{x,y>0} \frac{\tilde{\psi} (x,y)}{( \psi(x) +\psi(y) -2\psi(1))^2} \in [-\infty,\infty]$ cannot be calculated analytically in general. Now, for $\psi = \log$ we find
\begin{align*}
\tilde{\psi} (e^w,e^w) = \frac{2}{e^x} (1-e^{2x}) + 4x e^x = 2e^{-x} -2e^x+4xe^x = 2 \nu_{2,1} (w)
\end{align*}
and thus
\begin{align*}
\nu_{2,1} (w) = \frac{1}{2} \tilde{\psi} (e^w,e^w) \geq 2 C_{\log} w^2 = C(2) w^2
\end{align*} 
with $C(2) = 2C_{\log}$ being the optimal constant. Münch shows that $C_{\log} \geq \frac{1}{2}$ and gives a numerical approximation by $C_{\log} \approx 0,795$. See \cite[Chapter 6]{MUN} for more details. For the reader's convenience, we will give the proof that $CD_\Upsilon (0,d)$ is satisfied with some $d<\infty$ on Ricci-flat graphs later in this chapter.
\end{bemerkung}

The next example generalises our previous findings to the case of an unweighted complete graph with $m \in \N$ vertices. 

\begin{beispiel}
\label{BspVollstGraph}
Let $m \in \N\setminus \{1\}$ and $Y = \{y_1,\dots,y_m\}$. Let further $k(y_i,y_j) =1$ for $i \neq j$ and $k(y_i,y_i)=0$ for all $i \in \{1,\dots,m\}$. Then we have for $i \in \{1,\dots,m\}$ and $u:Y\to \R$
\begin{align*}
2 &\Psi_{2,\Upsilon} (u) (y_i) = L_d \Psi_\Upsilon (u)(y_i) - B_{\Upsilon^\prime} (u,L_du)(y_i) \\ &= \sum_{j\neq i} \Big(\Psi_{\Upsilon} (u)(y_j) - \Psi_\Upsilon (u)(y_i) - \Upsilon^\prime \big(u(y_j)-u(y_i)\big) \big(L_d u(y_j)-L_d u(y_i)\big)\Big). 
\end{align*}
Further, we have for $i\neq j$
\begin{align*}
L_d u(y_j)-L_d u(y_i) &= \sum_{l \neq j} \big(u(y_l)-u(y_j)\big) - \sum_{l \neq i} \big(u(y_l)-u(y_i)\big) \\ &= u(y_i)-(m-1)u(y_j)-u(y_j)+(m-1)u(y_i) \\
&= m \big(u(y_i)-u(y_j)\big),
\end{align*}
and thus
\begin{align*}
2 &\Psi_{2,\Upsilon} (u) (y_i) \\ &= \sum_{j\neq i} \Big( \Upsilon\big(u(y_i)-u(y_j)\big) - (m-1) \Upsilon \big(u(y_j)-u(y_i)\big) +m \Upsilon^\prime \big(u(y_j)-u(y_i)\big) \big(u(y_j)-u(y_i)\big)\Big) \\ &\qquad \qquad  + \sum_{j \neq i} \sum_{l \neq i,j} \Upsilon\big(u(y_l)-u(y_j)\big) \\ &\geq \sum_{j\neq i} \Big( \Upsilon\big(u(y_i)-u(y_j)\big) - (m-1) \Upsilon \big(u(y_j)-u(y_i)\big) +m \Upsilon^\prime \big(u(y_j)-u(y_i)\big) \big(u(y_j)-u(y_i)\big)\Big).
\end{align*}
Finally, using Lemma \ref{TechLemma} and Jensen's inequality we infer that
\begin{align*}
\Psi_{2,\Upsilon}(u)(y_i) \geq \frac{C(m)}{2} \sum_{j\neq i} \big(u(y_j)-u(y_i)\big)^2 &\geq \frac{C(m)}{2(m-1)} \Big(\sum_{j \neq i} \big(u(y_j)-u(y_i)\big)\Big)^2 \\ &= \frac{C(m)}{2(m-1)} \big(L_d u(y_i)\big)^2,
\end{align*}
i.e. $L_d$ satisfies $CD_\Upsilon \big(0,\frac{2(m-1)}{C(m)}\big)$. Together with Corollary \ref{KorollarCD}, we conclude that the corresponding operator $\mathcal{L} = \Delta \oplus L_d$ satisfies $CD_{hyb} (0,d)$ with $\displaystyle d = n+\frac{2(m-1)}{C(m)}$.
\end{beispiel}

\begin{bemerkung}
(i) Our result for the complete graph is closely related to \cite[Example 2.8]{We} in the special case when $\alpha=\frac{1}{2}$. In contrast to the findings there
(where $\alpha=\frac{1}{2}$ is excluded), we are able to treat this case as we do not have any interest in a positive curvature bound.

(ii) Note that in \cite{We} the author also considered the more general case of a weighted complete graph, see \cite[Example 2.7]{We}. There, he assumed the existence of a weight function $l: Y \to (0,\infty)$ such that the corresponding edge weights satisfy $k(y,z)=l(z)$ for all $y,z \in Y$ with $y\neq z$. Assuming symmetry for $k$ as we do, this implies that $k$ is constant on $\{(i,j)\in Y \times Y:i\neq j\}$, which by scaling time is equivalent to the case of an unweighted complete graph.
\end{bemerkung}

In the remaining part of this section we study the situation where the underlying graph to $L_d$ is Ricci-flat. This property covers many important examples of finite and locally finite but infinite graphs, such as the lattices $\iZ^d$. In fact, all (unweighted) Abelian Cayley graphs with degree $D\in \iN$ are Ricci-flat (see \cite{CY}), in particular unweighted finite complete graphs enjoy this property.
We already point out that for complete graphs the general $CD_\Upsilon$ result for Ricci-flat graphs, Theorem \ref{SatzRicciflat}, is weaker than that from Example \ref{BspVollstGraph}, see Example \ref{BeispielRicci} below.

The notion of a Ricci-flat graph was introduced in \cite{CY} and has been considered in many other works (see e.g. \cite{Harvard, DKZ, MUN, We, WZ}). We recall the definition here. Let $Y$ be a finite or an infinite countable set and $k: Y\times Y\to \{0,1\}$ be symmmetric. The egde set $E$ is the set of all pairs 
$(x,y)\in Y\times Y$ with $x\neq y$ and $k(x,y)=1$; in this case $x$ and $y$ are neighbours and we write $x\sim y$. For $x \in Y$ we set $N(x) = \{x\} \cup \{y \in Y: x \sim y\}$.
\begin{definition}
\label{DefinitionRicciflat}
Let $Y$ and $k$ be as before and suppose that $G = (Y,E)$ is a $D$-regular graph. Then $G$ is called Ricci-flat at $x$ if there exist maps $\eta_i:N(x) \mapsto Y, \ i=1,\dots,D$, such that the following properties are satisfied:
\begin{enumerate}
\item[(i)] $\eta_i (y) \sim y \text{ for all } i \in \{1,\dots,D\} \text{ and } y \in N(x)$,
\item[(ii)] $\eta_i (y) \neq \eta_j (y) \text{ for all } y \in N(x) \text{ and } i \neq j$,
\item[(iii)] $\displaystyle \bigcup_{j=1}^D \eta_i\big(\eta_j(x)\big) = \bigcup_{j=1}^D \eta_j \big(\eta_i(x)\big) \text{ for all } i \in \{1,\dots,D\}$.
\end{enumerate}
We call $G$ Ricci-flat if it is Ricci-flat at all $x \in Y$.
\end{definition}

The following lemma collects basic properties of Ricci-flat graphs and has been taken from \cite[Lemma 6.3]{MUN}, see also \cite[Lemma 3.16]{DKZ}.

\begin{lemma}
\label{LemmaPropRicci}
Let $G=(Y,E)$ be a $D$-regular graph which is Ricci-flat at $x \in Y$. Let $\eta_1,\dots,\eta_D$ be the maps from Definition \ref{DefinitionRicciflat}. Then the following holds true:
\begin{enumerate}
\item[(i)] For any function $u: Y \to \R$ and for all $i \in \{1,\dots,D\}$ we have
\begin{equation}
\sum_{j=1}^D u\big(\eta_i(\eta_j(x)\big) = \sum_{j=1}^D u\big(\eta_j(\eta_i(x)\big).
\label{PropRicciflat}
\end{equation}
\item[(ii)] For every $i \in \{1,\dots,D\}$ there exists a unique $i^* \in \{1,\dots,D\}$ such that $\eta_i\big(\eta_{i^*}(x)\big)=x$. Moreover, the map $i \mapsto i^*$ is a permutation of $\{1,\dots,D\}$.
\end{enumerate}
\end{lemma}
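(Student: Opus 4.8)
The plan is to deduce both statements from the three defining properties in Definition \ref{DefinitionRicciflat} together with $D$-regularity. The only genuinely delicate point is to upgrade the \emph{set} identity in property (iii) to an injectivity statement for the maps $\eta_i$ on neighbourhoods; everything else is bookkeeping with the axioms.

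\textbf{Step 1: the values $\eta_j(x)$ exhaust $N(x)$.} Since $\eta_j(x)\sim x$ for every $j$ by (i), and $\eta_j(x)\neq\eta_{j'}(x)$ for $j\neq j'$ by (ii), the collection $\{\eta_1(x),\dots,\eta_D(x)\}$ consists of $D$ pairwise distinct neighbours of $x$; as $G$ is $D$-regular this forces $\{\eta_1(x),\dots,\eta_D(x)\}=N(x)\setminus\{x\}$. In particular each $\eta_i(x)$ lies in $N(x)$, so all compositions $\eta_i(\eta_j(x))$, $\eta_j(\eta_i(x))$ in the statement are well defined. Running the same argument with $x$ replaced by its neighbour $\eta_i(x)$ shows that $\{\eta_1(\eta_i(x)),\dots,\eta_D(\eta_i(x))\}$ is exactly the set of $D$ neighbours of $\eta_i(x)$; since $k$ is symmetric the graph is undirected, so $x$ is one of them.

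\textbf{Step 2: injectivity, and proof of (i).} Fix $i$. By property (iii) the set $\bigcup_{j=1}^D\eta_i(\eta_j(x))$ equals $\bigcup_{j=1}^D\eta_j(\eta_i(x))$, which by Step 1 has exactly $D$ elements. Hence the image of the $D$-element set $\{\eta_1(x),\dots,\eta_D(x)\}$ under $\eta_i$ has $D$ elements, i.e.\ $\eta_i$ is injective on $N(x)\setminus\{x\}$. Consequently the family $\big(\eta_i(\eta_j(x))\big)_{j=1}^D$ consists of $D$ pairwise distinct points (the $\eta_j(x)$ being distinct by (ii) and $\eta_i$ injective on them), and likewise $\big(\eta_j(\eta_i(x))\big)_{j=1}^D$ consists of $D$ pairwise distinct points by (ii) applied at $\eta_i(x)\in N(x)$. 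Thus both families enumerate, without repetition, the single $D$-element set from (iii); they therefore differ only by a permutation of $j$, and \eqref{PropRicciflat} follows by summing $u$ over either enumeration.

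\textbf{Step 3: proof of (ii).} By Step 1 we have $x\in\bigcup_{j=1}^D\eta_j(\eta_i(x))$, so property (iii) yields some $i^*\in\{1,\dots,D\}$ with $\eta_i(\eta_{i^*}(x))=x$; uniqueness of $i^*$ is immediate from the injectivity of $\eta_i$ on $\{\eta_1(x),\dots,\eta_D(x)\}$ (Step 2) together with (ii). To see that $i\mapsto i^*$ is a permutation of $\{1,\dots,D\}$ it suffices, the set being finite, to check injectivity: if $i^*=j^*=:\ell$ with $i\neq j$, then $\eta_i(\eta_\ell(x))=x=\eta_j(\eta_\ell(x))$, contradicting property (ii) at the point $\eta_\ell(x)\in N(x)$, which forces $\eta_i(\eta_\ell(x))\neq\eta_j(\eta_\ell(x))$. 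Hence $i\mapsto i^*$ is a bijection.

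\textbf{Main obstacle.} The crux is Step 2: property (iii) is phrased as an equality of \emph{sets}, whereas parts (i) and (ii) really need the two families of double compositions to agree as \emph{multisets}. The bridge is the observation that the right-hand side of (iii) is already a set of full cardinality $D$ (Step 1), which pins the left-hand side to $D$ elements and thereby forces $\eta_i$ to be injective on the neighbourhood of $x$. After that, everything reduces to careful application of axioms (i) and (ii).
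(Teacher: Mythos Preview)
The paper does not supply its own proof of this lemma; it merely attributes the result to \cite[Lemma~6.3]{MUN} and \cite[Lemma~3.16]{DKZ} without further argument. Your proof is correct and self-contained. The key step---observing that the right-hand side of axiom~(iii) already has full cardinality~$D$ by Step~1, which forces the left-hand side to have $D$ elements as well and hence makes $\eta_i$ injective on the neighbours of~$x$---is exactly the point that upgrades the set identity to the multiset identity needed for~\eqref{PropRicciflat}, and the remainder follows cleanly from the axioms.

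One minor remark on wording: in Step~1 you write ``running the same argument with $x$ replaced by its neighbour $\eta_i(x)$'', but strictly the maps $\eta_j$ are only defined on $N(x)$, not on $N(\eta_i(x))$. What you actually use, and what is correct, is simply axioms~(i) and~(ii) evaluated at the point $y=\eta_i(x)\in N(x)$; this already gives that $\{\eta_1(\eta_i(x)),\dots,\eta_D(\eta_i(x))\}$ consists of $D$ pairwise distinct neighbours of $\eta_i(x)$ and hence, by $D$-regularity, equals the full neighbour set of $\eta_i(x)$ (which contains~$x$ since the graph is undirected). With this cosmetic adjustment the argument is complete.
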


The following theorem shows that $\mathcal{L}=\Delta \oplus L_d$ satisfies $CD_{hyb} (0,d)$ with some $d\in [1,\infty)$ whenever the graph corresponding to $L_d$ is Ricci-flat. We point out that here $Y$ can be also an infinite countable set
and remind the reader of Remark \ref{infiniteY}. Since the graph is locally finite, all involved sums
are finite and thus no further conditions on the admissible functions are required.

\begin{satz}
\label{SatzRicciflat}
Let the kernel $k$ be chosen in such a way that the underlying graph is a $D$-regular Ricci-flat graph with vertex set $Y$. Let $L_d$ denote the operator generated by the kernel $k$. Then $L_d$
satisfies the condition $CD_\Upsilon(0,\frac{2D}{C(2)})$, where 
$C(2)>1$ is the constant from Lemma \ref{TechLemma}. Moreover,
the operator $\mathcal{L}=\Delta \oplus L_d$ satisfies $CD_{hyb} (0,d)$ with $d = n+\frac{2D}{C(2)}$.
\end{satz}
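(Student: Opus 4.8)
The plan is to prove the curvature--dimension bound for the discrete operator $L_d$ alone: once $L_d$ is shown to satisfy $CD_\Upsilon(0,\tfrac{2D}{C(2)})$, the assertion for $\mathcal{L}=\Delta\oplus L_d$ is immediate from Corollary~\ref{KorollarCD} (in the form extended to locally finite infinite $Y$ recalled in Remark~\ref{infiniteY}). So fix a vertex $x\in Y$ and a function $u:Y\to\R$; the goal is the pointwise estimate $\Psi_{2,\Upsilon}(u)(x)\ge\tfrac{C(2)}{2D}\big(L_d u(x)\big)^2$.

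First I would unfold $2\Psi_{2,\Upsilon}(u)(x)=L_d\Psi_\Upsilon(u)(x)-B_{\Upsilon'}(u,L_d u)(x)$ along the Ricci-flat maps $\eta_1,\dots,\eta_D$ at $x$ from Definition~\ref{DefinitionRicciflat}. Writing $a_j:=u(\eta_j(x))-u(x)$ for the first-order differences and $a_{ij}:=u(\eta_i(\eta_j(x)))-u(\eta_j(x))$ for the second-order differences, $D$-regularity together with properties (i)--(ii) of Definition~\ref{DefinitionRicciflat} identifies the neighbours of $x$ with $\{\eta_j(x)\}_{j}$ and those of $\eta_j(x)$ with $\{\eta_i(\eta_j(x))\}_i$, so that $L_d u(x)=\sum_j a_j$, $\Psi_\Upsilon(u)(x)=\sum_j\Upsilon(a_j)$, $L_d u(\eta_j(x))=\sum_i a_{ij}$ and $\Psi_\Upsilon(u)(\eta_j(x))=\sum_i\Upsilon(a_{ij})$. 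Substituting turns $2\Psi_{2,\Upsilon}(u)(x)$ into an explicit expression in the $a_j$ and $a_{ij}$. Now Lemma~\ref{LemmaPropRicci} is applied twice. Part (ii) gives the permutation $i\mapsto i^*$ with $\eta_i(\eta_{i^*}(x))=x$, so that the ``backward'' second-order differences are $a_{i,i^*}=-a_{i^*}$; thus among $a_{1j},\dots,a_{Dj}$ exactly one equals $-a_j$. Part (i) (the exchange identity) expresses $L_d u(\eta_j(x))-L_d u(x)$ through the $a_{ij}$, and using it makes the terms of the expansion that involve $L_d u(x)$ cancel. For the remaining (non-backward) second-order differences I would use only convexity of $\Upsilon$, in the form of the tangent-line inequality $\Upsilon(a_{ij})-\Upsilon'(a_j)\,a_{ij}\ge\Upsilon(a_j)-a_j\Upsilon'(a_j)$ at the matched first-order difference, together with $\Upsilon\ge 0$ where convenient. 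Collecting everything leaves a lower bound for $2\Psi_{2,\Upsilon}(u)(x)$ that depends on $u$ only through the first-order differences $a_j$ (and the permutation from Lemma~\ref{LemmaPropRicci}(ii)); in the simplest case $D=1$ there are no leftover second-order differences at all and one gets exactly $2\Psi_{2,\Upsilon}(u)(x)=\nu_{2,1}(a_1)$, the situation underlying Example~\ref{2-pkteGraph}.

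It then remains to verify the resulting scalar inequality, namely that this collected expression dominates $\tfrac{C(2)}{D}\big(\sum_j a_j\big)^2$. The main tool here is Lemma~\ref{TechLemma}: the ``diagonal'' part of the expression is $\sum_j\nu_{2,1}(a_j)\ge C(2)\sum_j a_j^2$, which already beats $\tfrac{C(2)}{D}\big(\sum_j a_j\big)^2$ by Jensen's inequality, exactly as in Example~\ref{BspVollstGraph}; the point is that the surplus in this estimate --- coming both from the variance term $\sum_j a_j^2-\tfrac1D(\sum_j a_j)^2$ and, crucially, from the superlinear growth of $\nu_{2,1}$ relative to $C(2)(\cdot)^2$ --- must be enough to absorb the extra ``cross'' contribution produced by the permutation, and this is again handled by convexity of $\Upsilon$. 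This yields $CD_\Upsilon(0,\tfrac{2D}{C(2)})$ at $x$, hence everywhere, and Corollary~\ref{KorollarCD} gives $CD_{hyb}(0,n+\tfrac{2D}{C(2)})$ for $\mathcal{L}$. I expect the last step to be the main obstacle: one must organise the second-order terms and the exchange identity so that only the amount of curvature available in a two-point graph is demanded --- which is why the constant is $C(2)$ rather than the larger $C(D+1)$ appearing in the complete-graph Example~\ref{BspVollstGraph} --- and then check the precise scalar inequality (using $\nu_{2,1}\ge C(2)(\cdot)^2$ and convexity, not a naive termwise bound, which would be false). The whole computation parallels M\"unch's treatment of $CD\psi$ on Ricci-flat graphs recalled in Remark~\ref{BemerkungMuench}.
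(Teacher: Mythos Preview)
Your outline captures the overall strategy correctly --- expand $2\Psi_{2,\Upsilon}(u)(x)$ along the Ricci-flat maps, use Lemma~\ref{LemmaPropRicci}, discard the non-backward second-order terms via positivity/convexity of $\Upsilon$, and finish with Lemma~\ref{TechLemma} plus Jensen. However, there is a concrete gap at exactly the point you flag as ``the main obstacle.'' After carrying out the expansion and dropping the non-backward contributions, one does \emph{not} arrive at a diagonal expression of the form $\sum_j \nu_{2,1}(a_j)$. What actually remains (in the notation $a_j=u(\eta_j(x))-u(x)$) is
\[
2\Psi_{2,\Upsilon}(u)(x)\;\ge\;\sum_{j=1}^D e^{a_j}\,\Upsilon\!\big(-a_j-a_{j^*}\big),
\]
where $j\mapsto j^*$ is the permutation from Lemma~\ref{LemmaPropRicci}(ii). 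Only when $*$ is the identity does this collapse to $\sum_j \nu_{2,1}(-a_j)$. In general the permutation mixes different first-order differences, and the vague claim that the variance surplus together with ``superlinear growth of $\nu_{2,1}$'' absorbs these cross terms ``by convexity of $\Upsilon$'' is not a proof; in fact convexity of $\Upsilon$ alone does not seem to suffice here.

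The paper handles this precisely via a step you do not mention: the \emph{rearrangement inequality}. After ordering $a_1\le\cdots\le a_D$, one replaces the arbitrary permutation $j\mapsto j^*$ by the reversing involution $j\mapsto j':=D+1-j$ (this is legitimate because the expression in question is linear in the permuted quantities with monotone coefficients). The key gain is that now $(j')'=j$, so the new weights $\tilde w_j=-\tfrac12(a_j+a_{j'})$ satisfy $\tilde w_j=\tilde w_{j'}$; pairing the terms for $j$ and $j'$ and applying convexity of the \emph{exponential} (not of $\Upsilon$) gives $e^{a_j}+e^{a_{j'}}\ge 2e^{-\tilde w_j}$, and only then does the identity $e^{-w}\Upsilon(2w)=\nu_{2,1}(-w)$ put one in position to invoke Lemma~\ref{TechLemma} termwise. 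This rearrangement step is the missing idea in your plan; once you insert it, the rest of your outline goes through exactly as you describe.
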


The main part of the proof, which consists of the proof of the condition $CD_\Upsilon (0,\frac{2D}{C(2)})$ for the discrete operator $L_d$, is essentially contained in the proof of \cite[Theorem 6.6]{MUN} as $CD_\Upsilon (0,d)$ is equivalent to Münch's $CD \log (d,0)$-condition. Nevertheless, as Münch's notation cannot be translated so easily to our special case, we will give the proof here for the 
reader's convenience.

\begin{proof}
Let $x \in Y$ and $u:\,Y\rightarrow \iR$. We use the notation $z=u(x)$, $z_j = u(\eta_j(x))$ and $z_{ij} = u\big(\eta_j (\eta_i (x))\big)$ for $i,j \in \{1,\dots,D\}$. We have
\begin{align*}
2  \Psi_{2,\Upsilon}& (u)(x) =  L_d \Psi_\Upsilon (u)(x) - B_{\Upsilon^\prime} (u,L_d u)(x) \\ &= \sum_{i=1}^D \Big( \Psi_\Upsilon (u)(\eta_i(x)) - \Psi_\Upsilon (u)(x) - \Upsilon^\prime \big(u(\eta_i(x))-u(x)\big) \big(L_d u(\eta_i(x))-L_d u(x)\big)\Big) \\ &= \sum_{i=1}^D \sum_{j=1}^D \Big( \Upsilon(z_{ij}-z_i)-\Upsilon(z_j-z)-\Upsilon^\prime (z_i-z)(z_{ij}-z_i-z_j+z)\Big) \\ &= \sum_{j=1}^D \sum_{i=1}^D \Big( \Upsilon(z_{ij}-z_i)-\Upsilon(z_j-z)-\Upsilon^\prime (z_j-z)(z_{ij}-z_i-z_j+z)\Big),
\end{align*}
where we used first that property (\ref{PropRicciflat}) yields
\begin{align*}
\sum_{j=1}^D \Upsilon^\prime (z_i-z)z_{ij} = \sum_{j=1}^D \Upsilon^\prime (z_i-z)z_{ji}
\end{align*}
and then symmetry, together with Fubini's theorem. Now, as
\begin{align*}
\Upsilon (a) - \Upsilon (b) - \Upsilon^\prime (b)(a-b) = e^b \Upsilon (a-b), \ a,b \in \R,
\end{align*}
we find 
\begin{align*}
2 \Psi_{2,\Upsilon}& (u)(x) = \sum_{j=1}^D e^{z_j-z} \sum_{i=1}^D \Upsilon (z_{ij}-z_i-z_j+z) \\ &= \sum_{j=1}^D e^{z_j-z} \Big( \Upsilon (z_{j^*j}-z_{j^*}-z_j+z)+\sum_{i=1, i\neq j^*}^D  \Upsilon (z_{ij}-z_i-z_j+z) \Big) \\ &= \sum_{j=1}^D e^{z_j-z} \Big( \Upsilon (2z-z_{j^*}-z_j)+\sum_{i=1, i\neq j^*}^D  \Upsilon (z_{ij}-z_i-z_j+z) \Big)\geq \sum_{j=1}^D e^{z_j-z} \Upsilon (2w_j),
\end{align*}
with $w_j = z-\frac{1}{2}z_j-\frac{1}{2}z_{j^*}$. Here we applied Lemma \ref{LemmaPropRicci} (ii) and the positivity of $\Upsilon$. We next apply the rearrangement inequality similarly to the proof of \cite[Theorem 3.17]{DKZ}, i.e. we use that for all permutations $\pi$ on $\{1,\dots,D\}$ and all $0 \leq a_1\leq a_2 \leq \dots \leq a_D$ and all $0\leq b_1 \leq b_2 \leq \dots \leq b_D$ there holds
\begin{align*}
\sum_{j=1}^D a_{\pi(j)} b_j \geq \sum_{j=1}^D a_{D+1-j} b_j.
\end{align*}
In this sense, assume w.l.o.g. that $z_1 \leq z_2 \leq \dots \leq z_D$ and set $j^\prime = D+1-j$ for $j = 1,\dots,D$. Then
\begin{align*}
2 \Psi_{2,\Upsilon} (u)(x) &= \sum_{j=1}^D e^{z_j-z} \big(e^{2w_j}-2w_j-1\big) \\ &= \sum_{j=1}^D e^{z-z_{j^*}} - \sum_{j=1}^D e^{z_j-z} \big(2z-z_j-z_{j^*}\big) - \sum_{j=1}^D e^{z_j-z} \\ &\geq \sum_{j=1}^D e^{z-z_{j^\prime}} - \sum_{j=1}^D e^{z_j-z} \big(2z-z_j-z_{j^\prime}\big) - \sum_{j=1}^D e^{z_j-z} \\ &= \sum_{j=1}^D e^{z_j-z} \big(e^{2z-z_j-z_{j^\prime}}-(2z-z_j-z_{j^\prime})-1\big) = \sum_{j=1}^D e^{z_j-z} \Upsilon (2\tilde{w}_j)
\end{align*}
with $\tilde{w}_j = z-\frac{1}{2}z_j-\frac{1}{2}z_{j^\prime}$. The advantage of this estimate lies in the fact that $\tilde{w}_j = \tilde{w}_{j^\prime}$ holds as $(j^\prime)^\prime =j$. Finally, using Lemma \ref{TechLemma} and the convexity and nonnegativity of the exponential function and the square function yields
\begin{align*}
\Psi_{2,\Upsilon} (u)(x) & = \frac{1}{4} \sum_{j=1}^D e^{z_j-z} \Upsilon (2\tilde{w}_j) + \frac{1}{4} \sum_{j=1}^D e^{z_{D+1-j}-z} \Upsilon (2\tilde{w}_{D+1-j}) \\ &= \frac{1}{4} \sum_{j=1}^D \big(e^{z_j-z} +e^{z_{j^\prime}-z}\big) \Upsilon (2\tilde{w}_j) \geq \frac{1}{2} \sum_{j=1}^D e^{-\tilde{w}_j} \Upsilon (2\tilde{w}_j) \geq \frac{C(2)}{2} \sum_{j=1}^D \tilde{w}_j^2 \\ &\geq \frac{C(2)}{2D} \Big(\sum_{j=1}^D \tilde{w}_j\Big)^2 = \frac{C(2)}{2D} \Big(L_d u(x)\Big)^2,
\end{align*}
since $e^{-a} \Upsilon (2a) = \nu_{2,1} (-a)$ and 
\begin{align*}
\sum_{j=1}^D \tilde{w}_j = \sum_{j=1}^D (z-\frac{1}{2}z_j-\frac{1}{2} z_{D+1-j})= \sum_{j=1}^D (z-z_j) = -L_d u(x).
\end{align*}
As $x \in Y$ was arbitrary, it follows that $L_d$ satisfies $CD_\Upsilon (0,\frac{2D}{C(2)})$ and thus Theorem \ref{SatzHinrCD} shows that the corresponding operator $\mathcal{L}$ satisfies $CD_{hyb} (0,d)$ with $d=n+\frac{2D}{C(2)}$.
\end{proof}

\begin{beispiel}
\label{BeispielRicci}
(i) \emph{Complete graphs.} As already mentioned, unweighted finite complete graphs are Ricci-flat. 
Therefore, Theorem \ref{SatzRicciflat} shows that in case of an unweighted complete graph with 
$m$ vertices the operator $\mathcal{L} = \Delta \oplus L_d$ with $L_d$ being the operator generated by the kernel $k$ satisfies $CD_{hyb} (0,d)$ with $d=n+\frac{2(m-1)}{C(2)}$. For $m>2$, this result is weaker than the one from Example \ref{BspVollstGraph}, which establishes $CD_{hyb} (0,d)$ with the smaller number $d=n+\frac{2(m-1)}{C(m)}$ (recall that $C(m)>C(2)$ for $m>2$).

(ii) \emph{Complete bipartite graphs.} Let $k$ be chosen in such a way that the underlying graph is a ($D$-regular) complete bipartite graph. In the appendix of \cite{CKK} it has been shown that such a graph is Ricci-flat. Thus, it follows that the operator $L_d$ generated by $k$ satisfies $CD_\Upsilon (0,\frac{2D}{C(2)})$ and that $\mathcal{L}$ satisfies $CD_{hyb} (0,d)$ with $d = n+\frac{2D}{C(2)}$. An example for such a graph is given by the square, i.e.\ the graph corresponding to the vertex set $Y=\{y_1,y_2,y_3,y_4\}$ and the (symmetric) kernel $k$ with $k(y_i,y_j)=1$ for $\vert i-j \vert \in \{1,3\}$ and $k=0$ else.

(iii) \emph{Infinite graphs including the lattice $\Z$.} Although we focus on finite graphs in this article, we emphasise that Theorem \ref{SatzRicciflat} also shows the validity of $CD_\Upsilon (0,d)$ for certain infinite graph structures, recall Remark \ref{infiniteY}. Consider for example the graph Laplacian on the lattice $\Z$, i.e.\ the operator $L_d$ generated by the kernel $k: \Z \times \Z \to \{0,1\}$ with $k(x,y) =1$ if and only if $\vert x-y \vert = 1$. It can be easily seen that the corresponding graph is 2-regular and Ricci-flat. It follows that $L_d$ satisfies $CD_\Upsilon \big(0,\frac{4}{C(2)}\big)$. 
\end{beispiel}

\section{Li-Yau inequality}

In this section we derive Li-Yau inequalities for positive solutions of the heat equation with
operator $\mathcal{L} = \Delta \oplus L_d$ on $\iR^n\times Y$, where $L_d$ is as above and $Y$
is a finite set. In what follows the operators $\Gamma, \Gamma_2$ and $\nabla$ always act w.r.t.\ the continuous variable $x\in \iR^n$, while the operators $\Psi_H, \Psi_{2,\Upsilon}$ and $B_H$ refer to  the discrete variable $y\in Y$. 

By saying that $u$ is a {\em smooth} function on $(0,\infty) \times \Omega \times Y$, where $\Omega\subset \iR^n$ is
open, we mean that for every $y\in Y$ the function $u(\cdot,\cdot,y)$ is $C^\infty$ smooth
on $(0,\infty)\times \Omega$. Note that classical solutions of the equation $\partial_t u - \mathcal{L} u= 0$  on $(0,\infty) \times \Omega \times Y$ are always smooth in this sense, by parabolic
regularity theory.

The first step is to derive an evolution equation for the function $v = \log u$. This equation will be
the starting point of our further calculations.

\begin{lemma}
\label{LemmaEvolEqu}
Let $\Omega$ be an open subset of $\iR^n$ and $u$ be a positive smooth solution of the equation $\partial_t u - \mathcal{L} u= 0$ on $(0,\infty) \times \Omega \times Y$. Then the function $v = \log u$ satisfies
\begin{equation}
\partial_t v -\mathcal{L} v = \big( \vert \nabla v \vert^2 + \Psi_\Upsilon (v)\big)
\quad\mbox{on}\;\;(0,\infty) \times \Omega \times Y.
\end{equation}
\end{lemma}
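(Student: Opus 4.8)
The plan is to compute $\partial_t v - \mathcal{L} v$ directly from $v = \log u$ and the equation $\partial_t u = \mathcal{L} u = \Delta u + L_d u$, treating the continuous Laplacian part and the discrete part separately. For the time derivative, $\partial_t v = \partial_t u / u$. The key is then to expand $\mathcal{L} v = \Delta v + L_d v$ and compare with $(\partial_t u)/u = (\Delta u + L_d u)/u$.

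For the continuous part, I would use the diffusion property \eqref{DiffProp} for $L_c = \Delta$ applied to $\psi = \log$: since $\Delta \log u = \frac{1}{u}\Delta u - \frac{1}{u^2}|\nabla u|^2 = \frac{1}{u}\Delta u - |\nabla \log u|^2$, we get $\frac{1}{u}\Delta u = \Delta v + |\nabla v|^2 = \Delta v + \Gamma(v)$. For the discrete part, I would compute $\frac{1}{u(y)} L_d u(y)$ pointwise: writing $u = e^v$, one has $\frac{1}{u(y)}\sum_z k(y,z)(u(z) - u(y)) = \sum_z k(y,z)(e^{v(z)-v(y)} - 1)$. The goal is to massage this into $L_d v(y) + \Psi_\Upsilon(v)(y)$. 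Indeed, $L_d v(y) = \sum_z k(y,z)(v(z)-v(y))$ and $\Psi_\Upsilon(v)(y) = \sum_z k(y,z)\Upsilon(v(z)-v(y)) = \sum_z k(y,z)(e^{v(z)-v(y)} - 1 - (v(z)-v(y)))$, so adding these two gives exactly $\sum_z k(y,z)(e^{v(z)-v(y)} - 1) = \frac{1}{u(y)}L_d u(y)$. This is the discrete analogue of the chain-rule computation and the reason $\Upsilon(x) = e^x - 1 - x$ is the right function.

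Combining, $\frac{1}{u}\mathcal{L}u = \Delta v + \Gamma(v) + L_d v + \Psi_\Upsilon(v) = \mathcal{L}v + \Gamma(v) + \Psi_\Upsilon(v)$, so that $\partial_t v - \mathcal{L}v = \frac{\partial_t u}{u} - \mathcal{L}v = \frac{\mathcal{L}u}{u} - \mathcal{L}v = \Gamma(v) + \Psi_\Upsilon(v) = |\nabla v|^2 + \Psi_\Upsilon(v)$, which is the claimed identity. I would phrase the computation so that it is valid at each fixed $y \in Y$ and each $(t,x) \in (0,\infty)\times\Omega$; smoothness of $u$ in the continuous variable (and positivity, so that $\log u$ is well-defined and smooth) justifies all manipulations, and finiteness of $Y$ makes the discrete sums unproblematic.

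There is no real obstacle here — the statement is essentially a bookkeeping identity — but the one point requiring a little care is the discrete "chain rule" step: unlike the continuous Laplacian, $L_d$ does not satisfy a chain rule, and the correction term is precisely $\Psi_\Upsilon(v)$ rather than something involving a derivative of $\log$. Making this cancellation explicit (via the three-term expansion of $\Upsilon$) is the heart of the proof, and it is worth displaying the identity $\frac{1}{u(y)}L_d u(y) = L_d v(y) + \Psi_\Upsilon(v)(y)$ as a separate line. The rest is immediate from the diffusion property and the definition of $\mathcal{L}$.
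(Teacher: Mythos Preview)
Your proposal is correct and follows essentially the same approach as the paper. The only cosmetic difference is that the paper quotes the discrete chain-rule identity $L_d \log u = \frac{L_d u}{u} - \Psi_\Upsilon(\log u)$ from \cite[Lemma 2.2]{WZ}, whereas you derive it directly from the definition of $\Upsilon$; the remaining computation is identical.
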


\begin{proof}
From \cite[Lemma 2.2]{WZ} we know that
\begin{align*}
L_d \log u = \frac{L_d u}{u} - \Psi_\Upsilon (\log u).
\end{align*}
This yields
\begin{align*}
\mathcal{L} \log u &= \Delta \log u + L_d \log u = \nabla \cdot \Big( \frac{\nabla u}{u}\Big) + \frac{L_d u}{u} - \Psi_\Upsilon (\log u) \\ &= \frac{\Delta u}{u} - \frac{\vert \nabla u \vert^2}{u^2} + \frac{L_d u}{u} - \Psi_\Upsilon (\log u) = \frac{\mathcal{L} u}{u} - \vert \nabla \log u \vert^2 -\Psi_\Upsilon (\log u) \\ &= \partial_t \log u - \big(\vert \nabla \log u \vert^2 +\Psi_\Upsilon (\log u)\big).
\end{align*}
\end{proof}

For the proof of the Li-Yau inequality we will need the following proposition.

\begin{proposition}
\label{PropositionLY}
Let $\Omega$ be an open subset of $\iR^n$ and $u$ be a positive smooth solution of the equation $\partial_t u - \mathcal{L} u= 0$ on $(0,\infty) \times \Omega \times Y$ and suppose that $\mathcal{L}$ satisfies $CD_{hyb} (0,d)$ for some $d\in [1,\infty)$. Let $\theta \in (0,1)$ be fixed, $v=\log u$ and define the function $G: (0,\infty) \times \Omega \times Y \to \R$ by
\begin{equation}
G(t,x,y) = -t \Big( \theta \big(\vert \nabla v \vert^2 + \Psi_\Upsilon (v)\big) + \mathcal{L} v \Big),\quad t\in (0,\infty),\,x\in \Omega,\,y\in Y.
\label{DefG}
\end{equation}
Then we have
\begin{align*}
\partial_t G - \mathcal{L} G \leq  2 \langle \nabla v, \nabla G \rangle + \frac{G}{t} - &\frac{2(1-\theta)}{d} \frac{G^2}{t} -\frac{4\theta(1-\theta)}{d} \big( \vert \nabla v \vert^2 + \Psi_\Upsilon (v)\big) G +B_{\Upsilon^\prime} (v,G)
\end{align*}
on $(0,\infty) \times \Omega \times Y$.
\end{proposition}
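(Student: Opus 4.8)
The plan is to compute $\partial_t G - \mathcal{L} G$ directly from the definition \eqref{DefG}, using the evolution equation from Lemma \ref{LemmaEvolEqu} for $v=\log u$, and then to estimate the resulting expression via the curvature-dimension condition $CD_{hyb}(0,d)$. Write $F := |\nabla v|^2 + \Psi_\Upsilon(v) = (\Gamma \oplus \Psi_\Upsilon)(v)$, so that $\partial_t v - \mathcal{L}v = F$ and $G = -t(\theta F + \mathcal{L}v) = -t(\theta F + F - \partial_t v) = -t((1+\theta)F - \partial_t v)$. The key computational identity needed is the ``Bochner-type'' formula for $\partial_t F - \mathcal{L}F$: differentiating $F$ in $t$ and applying $\mathcal{L}$, one gets, after using the product rule for $\Gamma$ (via the diffusion property of $L_c$) and the corresponding rule for $\Psi_\Upsilon$ encapsulated in the operator $B_{\Upsilon'}$, an expression of the form
\begin{equation*}
\partial_t F - \mathcal{L} F = 2\langle \nabla v, \nabla F\rangle + B_{\Upsilon'}(v,F) - 2(\Gamma\oplus\Psi_\Upsilon)_2(v) + (\text{lower-order terms involving } \mathcal{L}v \text{ and } F),
\end{equation*}
where the precise bookkeeping uses Definition \ref{DefGamma2} and the fact that $\mathcal{L}v = F - \partial_t v$. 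This is the step I expect to be the main obstacle: carefully tracking all the terms that arise when commuting $\partial_t$ and $\mathcal{L}$ through the nonlinear quantity $F$, especially the discrete piece, since the $\Psi_\Upsilon$ and $B_{\Upsilon'}$ operators do not satisfy a clean chain rule and one must be meticulous with the $\Upsilon'$-weighted difference terms.

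With that Bochner formula in hand, I would then write $\partial_t G - \mathcal{L}G = -t(\partial_t((1+\theta)F - \partial_t v) - \mathcal{L}((1+\theta)F - \partial_t v)) - ((1+\theta)F - \partial_t v)$, and substitute. Since $u$ solves the heat equation, $\partial_t$ and $\mathcal{L}$ commute on $u$ but not on $v = \log u$; the extra terms are exactly absorbed by the nonlinearity, and one uses $\partial_t v - \mathcal{L}v = F$ again to eliminate $\partial_t v$ in favour of $\mathcal{L}v$ and $F$. After collecting terms, the transport term $2\langle\nabla v,\nabla G\rangle$ and the discrete transport term $B_{\Upsilon'}(v,G)$ should emerge directly from the $2\langle\nabla v,\nabla F\rangle + B_{\Upsilon'}(v,F)$ contributions (rescaled by $-t$ and using $\nabla G = -t(\ldots)$, plus a correction from $\nabla(\mathcal{L}v)$ handled through the definition of $(\Gamma\oplus\Psi_\Upsilon)_2$), and the term $-t \cdot (-2(\Gamma\oplus\Psi_\Upsilon)_2(v))$ will be the one we bound below.

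The final estimation step is where $CD_{hyb}(0,d)$ enters: we bound $(\Gamma\oplus\Psi_\Upsilon)_2(v) \geq \frac{1}{d}(\mathcal{L}v)^2$. Now $\mathcal{L}v = F - \partial_t v$, and from the definition of $G$ we have $\mathcal{L}v = -\frac{G}{t} - \theta F$, so $(\mathcal{L}v)^2 = \frac{G^2}{t^2} + 2\theta F \frac{G}{t} + \theta^2 F^2 \geq \frac{G^2}{t^2} + 2\theta F\frac{G}{t}$ when we only keep the terms matching the claimed bound — actually we must keep $\theta^2 F^2 \geq 0$ and more carefully track signs, since $G$ may change sign. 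The cleanest route: plug $\mathcal{L}v = -G/t - \theta F$ into $\frac{2t}{d}(\mathcal{L}v)^2$, expand, and identify $\frac{2}{d}\frac{G^2}{t}$, the cross term $\frac{4\theta}{d}FG$, and a leftover $\frac{2t\theta^2}{d}F^2 \geq 0$; then a factor of $(1-\theta)$ must come out of combining this with another $\frac{2}{d}(\mathcal{L}v)^2$-type contribution coming from the $\frac1n(L_c v)^2$ and $\frac1d(L_d v)^2$ split inside the tensorised condition, or more simply from retaining only $2(1-\theta)/d$ of the available $2/d$ and discarding the nonnegative remainder $\frac{2\theta}{d}(\mathcal Lv)^2 \cdot(\text{appropriate sign check})$. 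Assembling everything and discarding manifestly nonnegative leftover terms (using $d<\infty$, $\theta\in(0,1)$, $t>0$ and nonnegativity of $F$, $\Gamma$, $\Psi_\Upsilon$) yields exactly the asserted differential inequality. I would close by remarking that all manipulations are pointwise in $(t,x,y)$ and valid on $(0,\infty)\times\Omega\times Y$ by smoothness of $u$ and positivity.
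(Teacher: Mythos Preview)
Your overall strategy is correct and essentially matches the paper's proof, but you have a sign error that is the source of your confusion about the factor $(1-\theta)$. By Lemma~\ref{LemmaEvolEqu} one has $\partial_t v - \mathcal{L}v = F$, hence $\mathcal{L}v = \partial_t v - F$, not $F - \partial_t v$. Substituting correctly into $G = -t(\theta F + \mathcal{L}v)$ gives
\[
G \;=\; t\big((1-\theta)F - \partial_t v\big),
\]
not $-t((1+\theta)F - \partial_t v)$. With this corrected representation the coefficient $(1-\theta)$ appears automatically. Indeed, following your own plan,
\[
\partial_t G - \mathcal{L}G \;=\; \frac{G}{t} + t(1-\theta)\big(\partial_t F - \mathcal{L}F\big) - t\,\partial_t F,
\]
and inserting your Bochner-type identity $\partial_t F - \mathcal{L}F = 2\Gamma(v,F) + B_{\Upsilon'}(v,F) - 2(\Gamma\oplus\Psi_\Upsilon)_2(v)$ together with $\partial_t F = 2\Gamma(v,\partial_t v) + B_{\Upsilon'}(v,\partial_t v)$, the bilinear pieces combine as
\[
2\Gamma\big(v,\,t(1-\theta)F - t\partial_t v\big) + B_{\Upsilon'}\big(v,\,t(1-\theta)F - t\partial_t v\big) \;=\; 2\langle\nabla v,\nabla G\rangle + B_{\Upsilon'}(v,G),
\]
yielding the exact identity
\[
\partial_t G - \mathcal{L}G \;=\; \frac{G}{t} + 2\langle\nabla v,\nabla G\rangle + B_{\Upsilon'}(v,G) - 2t(1-\theta)\,(\Gamma\oplus\Psi_\Upsilon)_2(v).
\]
Now apply $CD_{hyb}(0,d)$ to the last term, substitute $\mathcal{L}v = -G/t - \theta F$, expand $(\mathcal{L}v)^2$ and drop the manifestly nonnegative $\theta^2 F^2$; this gives the asserted inequality directly. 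There is no need to ``retain only $2(1-\theta)/d$ of the available $2/d$'' or to invoke the tensorised split: the coefficient is $2(1-\theta)/d$ from the outset once the sign is right. The paper's proof is the same computation, merely organised through the two equivalent representations $\mathcal{L}v = -G/t - \theta F$ and $\mathcal{L}v = -G/\big((1-\theta)t\big) - \tfrac{\theta}{1-\theta}\partial_t v$ rather than through an explicit formula for $\partial_t F - \mathcal{L}F$.
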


\begin{proof}
By Definition \ref{DefGamma2} we have
\begin{align*}
\mathcal{L} \big(\vert \nabla v \vert^2 + \Psi_\Upsilon (v)\big) &= \mathcal{L} \big( \Gamma \oplus \Psi_\Upsilon\big) (v) = 2 (\Gamma \oplus \Psi_\Upsilon)_2 (v) + 2 \Gamma ( v,\mathcal{L} v)+ B_{\Upsilon^\prime} (v,\mathcal{L} v).
\end{align*}
Further, by Lemma \ref{LemmaEvolEqu}, equation (\ref{DefG}) can be rewritten as 
\begin{align}
G(t,x,y) &= t \Big( (1-\theta) \big(\vert \nabla v \vert^2 + \Psi_\Upsilon (v)\big) - \partial_t v \Big) \notag \\ &= -t \big( \theta \partial_t v +(1-\theta) \mathcal{L} v \big).
\label{AlternativeDarstG}
\end{align}
This yields
\begin{align*}
-\mathcal{L} G &= -t(1-\theta) \mathcal{L} \big(\vert \nabla v \vert^2 + \Psi_\Upsilon (v)\big) + t \partial_t \mathcal{L} v \\ &= -t(1-\theta) \big(2 (\Gamma \oplus \Psi_\Upsilon)_2 (v) + 2 \Gamma ( v,\mathcal{L} v)+ B_{\Upsilon^\prime} (v,\mathcal{L} v)\big) + t \partial_t \mathcal{L} v.
\end{align*}
As $\mathcal{L}$ satisfies $CD_{hyb} (0,d)$ we have
\begin{align*}
-\mathcal{L} G &\leq -\frac{2 t(1-\theta)}{d} \big(\mathcal{L} v\big)^2 - 2t(1-\theta) \Gamma ( v,\mathcal{L} v)- t (1-\theta) B_{\Upsilon^\prime} (v,\mathcal{L} v) + t \partial_t \mathcal{L} v.
\end{align*}
We now use that (\ref{DefG}) and (\ref{AlternativeDarstG}) are equivalent to 
\begin{align*}
&\mathcal{L} v = -\frac{G}{t} - \theta \big(\vert \nabla v \vert^2 + \Psi_\Upsilon (v)\big) \text{ and } \mathcal{L} v = -\frac{G}{(1-\theta) t} - \frac{\theta}{1-\theta} \partial_t v.
\end{align*}
This gives
\begin{align*}
-\mathcal{L} G &\leq -\frac{2 t(1-\theta)}{d} \Big(\frac{G}{t} + \theta \big(\vert \nabla v \vert^2 + \Psi_\Upsilon (v)\big)\Big)^2 + 2t \big\langle \nabla v,\nabla \Big(\frac{G}{t} +\theta\partial_t v \Big) \big\rangle\\ &\qquad   + t \partial_t \Big( -\frac{G}{t} - \theta \big( \vert \nabla v \vert^2 + \Psi_\Upsilon (v)\big) \Big) - t (1-\theta) B_{\Upsilon^\prime} (v,\mathcal{L} v) \\ &\leq -\frac{2 t(1-\theta)}{d} \Big(\frac{G^2}{t^2} + 2\frac{\theta G}{t} \big( \vert \nabla v \vert^2 + \Psi_\Upsilon (v)\big)\Big) + 2t \big\langle \nabla v,\nabla \Big(\frac{G}{t} +\theta\partial_t v \Big) \big\rangle \\ &\qquad    - t \Big( \frac{(\partial_t G) t - G}{t^2} + \theta \partial_t \big( \vert \nabla v \vert^2 + \Psi_\Upsilon (v)\big) \Big) - t (1-\theta) B_{\Upsilon^\prime} (v,\mathcal{L} v) \\ &= 2 \langle \nabla v,\nabla G \rangle + 2\theta t \langle \nabla v, \nabla \partial_t v \rangle + \frac{G}{t} -\frac{2(1-\theta)}{d} \frac{G^2}{t} -\frac{4\theta(1-\theta)}{d} \big( \vert \nabla v \vert^2 + \Psi_\Upsilon (v)\big) G \\ &\qquad  - \partial_t G - 2 \theta t \langle \nabla v, \nabla \partial_t v \rangle - t\theta \partial_t \Psi_\Upsilon(v)  - t (1-\theta) B_{\Upsilon^\prime} (v,\mathcal{L} v).
\end{align*}
Using the Cauchy-Schwarz inequality and the fact that
\begin{align*}
\partial_t \Psi_\Upsilon (v) (t,x,y) &= \sum_{z \in Y} k(y,z) \Upsilon^\prime \big((v(t,x,z)-v(t,x,y)\big) \big(\partial_t v(t,x,z) - \partial_t v(t,x,y)\big) \\ &= B_{\Upsilon^\prime} (v,\partial_t v)(t,x,y)
\end{align*}
finally gives
\begin{align*}
\partial_t G-\mathcal{L} G &\leq 2 \langle \nabla v,\nabla G \rangle + \frac{G}{t} -\frac{2(1-\theta)}{d} \frac{G^2}{t} -\frac{4\theta(1-\theta)}{d} \big( \vert \nabla v \vert^2 + \Psi_\Upsilon (v)\big) G\\ &\qquad    - t\theta B_{\Upsilon^\prime} (v,\partial_t v) - t (1-\theta) B_{\Upsilon^\prime} (v,\mathcal{L} v) \\ &=  2 \langle \nabla v,\nabla G \rangle + \frac{G}{t} -\frac{2(1-\theta)}{d} \frac{G^2}{t} -\frac{4\theta(1-\theta)}{d} \big(\vert \nabla v \vert^2 + \Psi_\Upsilon (v)\big) G \\ &\qquad  + B_{\Upsilon^\prime} \big(v,- t\theta \partial_t v- t (1-\theta)\mathcal{L} v\big) \\ &= 2 \langle \nabla v,\nabla G \rangle + \frac{G}{t} -\frac{2(1-\theta)}{d} \frac{G^2}{t} -\frac{4\theta(1-\theta)}{d} \big(\vert \nabla v \vert^2 + \Psi_\Upsilon (v)\big) G + B_{\Upsilon^\prime} (v,G),
\end{align*}
where we used (\ref{AlternativeDarstG}) in the last step.
\end{proof}

We are now able to prove a Li-Yau inequality for positive solutions to the diffusion equation 
$\partial_t u - \mathcal{L} u= 0$. Note that in the following theorem we consider functions
that merely solve the equation locally w.r.t.\ the continuous variable. In the sequel, for $x\in \iR^n$
and $r>0$ we let $B_r (x) = \{y \in \R^n: \vert y-x \vert < r\}$.

\begin{satz}
\label{SatzLY}
Let $\theta\in (0,1)$, $\rho >0$, $z \in \R^n$ and let $u: (0,\infty) \times B_{3 \rho}(z) \times Y \to (0,\infty)$ be a smooth solution to the equation $\partial_t u - \mathcal{L} u= 0$ on $(0,\infty) \times B_{3\rho} (z) \times Y$. Assume that there exists some $d\in [1,\infty)$ such that the operator $\mathcal{L}$ satisfies $CD_{hyb} (0,d)$. Then there exists a constant $C>0$ which is independent of $u,\rho,z$ and $\theta$ such that the function $v = \log u$ satisfies
\begin{equation}
(1-\theta) \big( \vert \nabla v \vert^2 + \Psi_\Upsilon (v)\big) - \partial_t v \leq \frac{d}{2t(1-\theta)} + \frac{C}{2(1-\theta)\rho^2} \Big(1+\frac{1}{\theta(1-\theta)}\Big)
\label{LYInequ}
\end{equation}
on $(0,\infty) \times B_{\rho} (z) \times Y$.
\end{satz}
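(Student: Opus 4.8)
The plan is to run the classical Li--Yau maximum principle argument on the quantity $G$ from Proposition~\ref{PropositionLY}, but localized in the continuous variable by a suitable cutoff function of the ball $B_{3\rho}(z)$. Fix $T>0$ arbitrary; it suffices to prove \eqref{LYInequ} at time $T$, and by the scaling/translation-invariance noted in the statement we may eventually absorb $\rho,z$ into the constant. First I would introduce a smooth cutoff $\varphi:\iR^n\to[0,1]$ with $\varphi\equiv 1$ on $B_\rho(z)$, $\supp\varphi\subset B_{2\rho}(z)$, and the standard gradient bounds $|\nabla\varphi|^2/\varphi\le c/\rho^2$ and $|\Delta\varphi|\le c/\rho^2$ (these are available since $2\rho<3\rho$ keeps us strictly inside the domain of the solution). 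Set $\phi=\varphi G$ and consider, for fixed $T$, the maximum of $\phi$ over the compact set $[0,T]\times\overline{B_{2\rho}(z)}\times Y$; since $\phi$ vanishes at $t=0$ and on the lateral boundary where $\varphi=0$, if $\max\phi\le 0$ we are immediately done, so assume the maximum is attained at an interior point $(t_0,x_0,y_0)$ with $t_0>0$ and $G(t_0,x_0,y_0)>0$.

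At the maximum point the standard first- and second-order conditions give $\partial_t\phi\ge 0$, $\nabla\phi=0$ (hence $\nabla G=-G\nabla\varphi/\varphi$ at that point), $\Delta\phi\le 0$, and $\mathcal{L}\phi=(\Delta\oplus L_d)\phi\le 0$ — here one must be slightly careful with the discrete part: $L_d\phi(y_0)=\sum_z k(y_0,z)(\phi(z)-\phi(y_0))\le 0$ because $y_0$ is a maximizer in $y$. The next step is to translate $\partial_t(\varphi G)-\mathcal{L}(\varphi G)\le 0$ at $(t_0,x_0,y_0)$ into an inequality for $G$ itself using the product/Leibniz rules for $\Delta$ and $L_d$: $\Delta(\varphi G)=\varphi\Delta G+2\langle\nabla\varphi,\nabla G\rangle+G\Delta\varphi$, while $L_d$ acts only on the $y$-variable so $L_d(\varphi G)=\varphi L_d G$. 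Substituting the bound on $\partial_t G-\mathcal{L}G$ from Proposition~\ref{PropositionLY}, and using $\nabla G=-G\nabla\varphi/\varphi$ to rewrite the $\langle\nabla v,\nabla G\rangle$ and $\langle\nabla\varphi,\nabla G\rangle$ terms, one arrives at an inequality of the schematic form
\begin{align*}
0\le -\frac{2(1-\theta)}{d}\frac{G^2}{t_0}\varphi-\frac{4\theta(1-\theta)}{d}\big(|\nabla v|^2+\Psi_\Upsilon(v)\big)G\varphi+\frac{G}{t_0}\varphi+(\text{cutoff error terms})+\varphi B_{\Upsilon'}(v,G),
\end{align*}
where the cutoff error terms are controlled by $\tfrac{c}{\rho^2}G+\tfrac{c}{\rho^2}|\nabla v|\,|G|$ (from $G\Delta\varphi$, $2\langle\nabla\varphi,\nabla G\rangle$, and $2G\langle\nabla v,\nabla\varphi\rangle$). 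The term $\varphi B_{\Upsilon'}(v,G)$ has to be shown nonpositive at $(t_0,x_0,y_0)$: since $\phi(\cdot,\cdot,y_0)\ge\phi(\cdot,\cdot,z)$ and $\varphi(x_0)>0$ one gets $G(x_0,z)\le G(x_0,y_0)$, and combined with $\Upsilon'(v(z)-v(y_0))\ge -1$ together with the elementary sign bookkeeping used in \cite{DKZ,WZ} this discrete term does not spoil the estimate — this I expect to be the main obstacle, as it is the one place where the hybrid (as opposed to purely continuous) nature of the problem genuinely intervenes and one must verify carefully that $B_{\Upsilon'}(v,G)(y_0)\le 0$ or at least that it can be absorbed.

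Once the discrete term is disposed of, the rest is the classical Young's-inequality bookkeeping. Multiplying through by $t_0\varphi$ and writing $W=|\nabla v|^2+\Psi_\Upsilon(v)\ge 0$, one absorbs the term $\tfrac{c}{\rho^2}t_0\varphi|\nabla v||G|$ into a small multiple of $\tfrac{\theta(1-\theta)}{d}t_0\varphi W G$ plus a term $\tfrac{c\, d}{\theta(1-\theta)\rho^4}t_0\varphi G$ (using $|\nabla v|\le W^{1/2}$ and $2ab\le \varepsilon a^2+\varepsilon^{-1}b^2$), and discards the remaining favorable negative $WG\varphi$ contribution. What survives is a quadratic inequality in $\phi(t_0,x_0,y_0)=\varphi(x_0)G(t_0,x_0,y_0)$ of the form $\tfrac{2(1-\theta)}{d}\phi^2\le\big(1+\tfrac{c\,t_0}{\rho^2}(1+\tfrac1{\theta(1-\theta)})\big)\phi+(\text{lower order})$, whence $\phi(t_0,x_0,y_0)\le \tfrac{d}{2(1-\theta)}\big(1+\tfrac{c\,T}{\rho^2}(1+\tfrac1{\theta(1-\theta)})\big)$ after using $t_0\le T$ and $\varphi^2\ge\varphi$ on the support suitably, or more cleanly by tracking the cutoff powers as in \cite[Chapter~12]{Li}. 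Since $\varphi\equiv 1$ on $B_\rho(z)$, this bounds $G(T,x,y)=\phi(T,x,y)$ for all $(x,y)\in B_\rho(z)\times Y$, and recalling the definition $G=-t(\theta W+\mathcal{L}v)$ together with $\mathcal{L}v=\partial_t v - W$ (Lemma~\ref{LemmaEvolEqu}) gives $-t\big((1-\theta)W... wait$, $G=-t(\theta W+\partial_t v - W)=t((1-\theta)W-\partial_t v)$, so dividing by $t(1-\theta)$ turns the bound on $G$ into exactly \eqref{LYInequ}; finally dividing the remaining $\rho$-dependence appropriately and noting $T$ was arbitrary completes the proof.
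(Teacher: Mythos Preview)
Your plan is essentially the paper's own proof: cutoff in the continuous variable, maximum principle on $\varphi G$ over $[0,T]\times\overline{B_{2\rho}(z)}\times Y$, Proposition~\ref{PropositionLY}, and the standard Young-inequality bookkeeping. Two points need to be sharpened.

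First, a slip: at the maximum point one has $\partial_t(\varphi G)-\mathcal{L}(\varphi G)\ge 0$, not $\le 0$. Your schematic inequality after it has the correct orientation, so this is only a typo.

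Second, and this is the substantive point you flagged as the main obstacle: the term $\varphi\,B_{\Upsilon'}(v,G)=B_{\Upsilon'}(v,\varphi G)$ is \emph{not} nonpositive at $(t_0,x_0,y_0)$ in general, because $\Upsilon'(w)=e^w-1$ changes sign. Your observation $\Upsilon'\ge -1$ is exactly the right ingredient, but one should use it as the identity
\[
B_{\Upsilon'}(v,\varphi G)\;=\;B_{\exp}(v,\varphi G)\;-\;L_d(\varphi G).
\]
Here $B_{\exp}(v,\varphi G)(y_0)\le 0$ since $e^{v(z)-v(y_0)}>0$ and $(\varphi G)(z)\le(\varphi G)(y_0)$, while $-L_d(\varphi G)(y_0)\ge 0$ and therefore cannot simply be dropped. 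The cure is to \emph{not} spend the discrete maximum condition $L_d(\varphi G)\le 0$ in your starting inequality: use only $\partial_t(\varphi G)\ge 0$ and $\Delta(\varphi G)\le 0$, keep $-L_d(\varphi G)$ on the left-hand side, and let it cancel exactly against the $-L_d(\varphi G)$ arising from the identity above. This is precisely what the paper does; after the cancellation only $B_{\exp}(v,\varphi G)\le 0$ remains and the rest of your argument goes through verbatim.
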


\begin{bemerkung}
Note that by Lemma \ref{LemmaEvolEqu}, the left-hand side in \eqref{LYInequ} can be rewritten as
\[
(1-\theta) \big( \vert \nabla v \vert^2 + \Psi_\Upsilon (v)\big) - \partial_t v=-\mathcal{L}v-
\theta  \big( \vert \nabla v \vert^2 + \Psi_\Upsilon (v)\big).
\]
\end{bemerkung}

The procedure of our proof is much inspired by the argument given in \cite[Theorem 12.2]{Li} in the purely continuous
setting. 

\begin{proof}
Note first that we may assume without restriction of generality that $u$ is a smooth function on
$[0,\infty) \times B_{3 \rho}(z) \times Y$. In fact, for any $\varepsilon>0$, we can restrict $u$
to $[\varepsilon,\infty) \times B_{3 \rho}(z) \times Y$ and shift the time by setting 
$\tilde{t}=t-\varepsilon$. The function $\tilde{u}$ defined by $\tilde{u}(\tilde{t},x,y)=u(t,x,y)$
is then smooth on $[0,\infty) \times B_{3 \rho}(z) \times Y$ and satisfies
$\partial_t \tilde{u} - \mathcal{L} \tilde{u}= 0$, too. Having proved the claimed estimate for $\tilde{u}$, we
can transform back to $u$ resulting in an estimate of the form \eqref{LYInequ}, but valid 
only for $t>\varepsilon$ and with $\frac{1}{t-\varepsilon}$ in place of $\frac{1}{t}$. 
Since  $\varepsilon>0$ can be made arbitrary small, we obtain the desired estimate for $u$ by sending
$\varepsilon\to 0$.

We choose a cutoff function $\Phi \in C^2 \big(B_{3\rho}(z)\big)$ with respect to the continuous variable of the form 
\begin{align*}
\Phi (x) = \begin{cases}
1 , \text{ if } 0 \leq \vert x-z \vert < \rho,\\
\Lambda (\vert x-z\vert), \text{ if } \rho \leq \vert x-z \vert \leq 2\rho,\\
0, \text{ else,}\end{cases}
\end{align*}
where the $C^2$-function $\Lambda: [\rho,2\rho] \to [0,1]$ is such that $\Lambda(r)>0$ in $[\rho,2\rho)$,
\begin{align}
&-\frac{C_1}{\rho} \sqrt{\Lambda (r)} \leq \Lambda^\prime (r) \leq 0
\text{ and } \Lambda^{\prime \prime} (r) \geq -\frac{C_1}{\rho^2},\quad r\in[\rho,2\rho],
\label{BedPhi}
\end{align}
with some constant $C_1>0$ independent of $\rho$. 

Let $T>0$ be arbitrarily fixed and the function $G$ be given as in Proposition \ref{PropositionLY}, but now considered on the domain $[0,T] \times B_{3\rho} (z) \times Y$, i.e.
\begin{align*}
G(t,x,y) = t \Big( (1-\theta) \big(\vert \nabla v(t,x,y) \vert^2 + \Psi_\Upsilon (v)(t,x,y)\big) - \partial_t v(t,x,y) \Big).
\end{align*}
Our aim is to show \eqref{LYInequ} (where the left-hand side is precisely $\frac{1}{t}G(t,x,y)$) with $t=T$. 

Clearly, the support of the function $(\Phi G): [0,T] \times B_{3\rho} (z) \times Y \to \R, \ (\Phi G)(t,x,y) = \Phi (x) G(t,x,y)$ is 
contained in $[0,T] \times \bar{B}_{2\rho} (z) \times Y$. Let $(t_*,x_*,y_*)$ be a maximum point of $\Phi G$ considered as 
a function on $[0,T] \times \bar{B}_{2\rho} (z) \times Y$ (which exists due to compactness of $\bar{B}_{2\rho} (z)$ in $\R^n$ and finiteness of $Y$).
If $(\Phi G) (t_*,x_*,y_*) \le 0$, then for all $(x,y) \in B_{ \rho} (z) \times Y$ we have
\begin{align*}
G (T,x,y) = (\Phi G) (T,x,y) \leq  (\Phi G) (t_*,x_*,y_*) \leq 0,
\end{align*}
and thus $\frac{1}{T}G(T,x,y)\le 0$ in $B_{ \rho} (z) \times Y$, which implies \eqref{LYInequ} with $t=T$ (by positivity of the
right-hand side in \eqref{LYInequ}).

Suppose now that $(\Phi G) (t_*,x_*,y_*) >0$ . Then $(t_*,x_*,y_*) \in (0,T] \times B_{2\rho} (z) \times Y$, by definition
of $G$ and $\Phi$, and therefore at the maximum point $(t_*,x_*,y_*)$
\begin{equation}
\nabla (\Phi G) = 0, \quad \Delta (\Phi G) \leq 0\; \text{ and }\; \partial_t (\Phi G) = \Phi \partial_t G \geq 0 
\label{MaxBedKont}
\end{equation}
must hold. Furthermore, there holds 
\begin{equation}
(\Phi G) (t_*,x_*,y_*) \geq (\Phi G) (t_*,x_*,z) , \ z \in Y.
\label{MaxBedDisc}
\end{equation}

For $x \in B_{2\rho} (z)$ we will need a lower bound for $\Delta \Phi (x)$. By (\ref{BedPhi}), we find for $x \in \bar{B}_{2\rho} (z) \setminus B_{\rho} (z)$
\begin{align*}
\Delta \Phi (x) &= \Delta \Lambda (\vert x-z \vert) = \Lambda^{\prime \prime} (\vert x-z \vert) \big \vert \nabla \vert x-z \vert \big \vert ^2 + \Lambda ^\prime (\vert x-z \vert) \Delta \vert x-z \vert \\&= \Lambda^{\prime \prime} (\vert x-z \vert) + \Lambda ^\prime (\vert x-z \vert) \frac{n-1}{\vert x-z \vert} \geq -\frac{C_1}{\rho^2} - \frac{C_1}{\rho} \sqrt{\Lambda (\vert x-z \vert)} \frac{n-1}{\vert x-z \vert} \\ &\geq -\frac{C_1}{\rho^2} - \frac{C_1}{\rho} \sqrt{\Lambda (\vert x-z \vert)} \frac{n-1}{2 \rho } \geq -\frac{C_2}{\rho^2},
\end{align*}
where $C_2 = C_1 \big(1+\frac{n-1}{2}\big)$. Note that this estimate also holds for any $x \in B_{ \rho} (z)$, by negativity of the right-hand side. Now, since $\Phi$ does not depend on the discrete variable $y$, we find that
\begin{align*}
\mathcal{L} (\Phi G) = \Delta (\Phi G) + \Phi L_d (G) &= (\Delta \Phi) G + 2 \langle \nabla \Phi, \nabla G \rangle + \Phi (\mathcal{L} G).
\end{align*}
Together with the previous estimate, this yields on $(0,T] \times B_{2\rho} (z) \times Y$
\begin{align*}
-\mathcal{L} (\Phi G) \leq \frac{C_2}{\rho^2} |G| - 2 \langle \nabla \Phi, \nabla G \rangle - \Phi (\mathcal{L} G).
\end{align*}
Applying Proposition \ref{PropositionLY} it follows that on $(0,T] \times B_{2\rho} (z) \times Y$ we have that
\begin{align*}
-\mathcal{L} (\Phi G) \leq \frac{C_2}{\rho^2} |G| -& \frac{2}{\Phi} \langle \nabla \Phi, \Phi \nabla G \rangle + \Phi \Big(- \partial_t G + 2 \langle \nabla v, \nabla G \rangle + \frac{G}{t} - \frac{2(1-\theta)}{d} \frac{G^2}{t} \\ &-\frac{4\theta(1-\theta)}{d} \big( \vert \nabla v \vert^2 + \Psi_\Upsilon (v)\big) G +B_{\Upsilon^\prime} (v,G)\Big).
\end{align*}

We now evaluate this inequality at the maximum point $(t_*,x_*,y_*)$ and apply (\ref{MaxBedKont}). In particular, observe that 
$\Phi \nabla G = -G \nabla \Phi$ at $(t_*,x_*,y_*)$. Hence, we have at the maximum point 
\begin{align}
-L_d (\Phi G) &\leq \frac{C_2}{\rho^2} G + \frac{2}{\Phi} G \langle \nabla \Phi,  \nabla \Phi \rangle - 2 G \langle \nabla v, \nabla \Phi \rangle + \frac{\Phi G}{t_*} - \frac{2(1-\theta)}{d} \frac{\Phi G^2}{t_*}
\nonumber \\ &\qquad   -\frac{4\theta(1-\theta)}{d} \big( \vert \nabla v \vert^2 + \Psi_\Upsilon (v)\big) \Phi G + B_{\Upsilon^\prime} (v,\Phi G) \nonumber\\ &\leq \frac{C_2}{\rho^2} G + \frac{2}{\Phi} \vert \nabla \Phi \vert^2 G + 2 G  \vert \nabla v \vert \vert \nabla \Phi \vert + \frac{\Phi G}{t_*} - \frac{2(1-\theta)}{d} \frac{\Phi G^2}{t_*} 
\nonumber\\ &\qquad   -\frac{4\theta(1-\theta)}{d} \big( \vert \nabla v \vert^2 + \Psi_\Upsilon (v)\big) \Phi G + B_{\exp} (v,\Phi G) - L_d (\Phi G),\label{eststep}
\end{align}
where we used the Cauchy-Schwarz inequality in the second step. By (\ref{BedPhi}), on $B_{2 \rho} (z)\setminus B_{ \rho} (z)$ there holds
\begin{align*}
\vert \nabla \Phi (x) \vert \leq \big \vert \Lambda^\prime (\vert x-z \vert) \big \vert \leq \frac{C_1}{\rho} \sqrt{\Phi (x)}
\end{align*}
and thus $|\nabla \Phi (x)|\le  \frac{C_1}{\rho} \sqrt{\Phi (x)}$ for all $x\in B_{2 \rho} (z)$. Using this estimate, 
positivity of $\Psi_\Upsilon (v)$ and the fact that \eqref{MaxBedDisc} yields 
\begin{align*}
B_{\exp} (v,\Phi G) (t_*,x_*,y_*) &= \sum_{z \in Y} k(y_*,z) \frac{u(t_*,x_*,z)}{u(t_*,x_*,y_*)} \big( (\Phi G) (t_*,x_*,z)-(\Phi G) (t_*,x_*,y_*)\big) \leq 0,
\end{align*}
we further deduce from \eqref{eststep} that
\begin{align*}
0 &\leq \frac{C_3}{\rho^2} G + \frac{2 C_1}{\rho} \Phi^\frac{1}{2} G  \vert \nabla v \vert + \frac{\Phi G}{t_*} - \frac{2(1-\theta)}{d} \frac{\Phi G^2}{t_*} -\frac{4\theta(1-\theta)}{d} \vert \nabla v \vert^2 \Phi G
\end{align*}
with $C_3 = C_2+2 C_1^2$.

Multiplying by $t_* \Phi(x_*)$ then yields
\begin{align*}
0 &\geq \frac{2(1-\theta)}{d} (\Phi G)^2 - \Phi G \Big(\frac{C_3}{\rho^2} t_* + \frac{2 C_1}{\rho} \Phi^\frac{1}{2} t_* \vert \nabla v \vert -\frac{4\theta(1-\theta)}{d} \vert \nabla v \vert^2 t_* \Phi + \Phi \Big) \\ &= \frac{2(1-\theta)}{d} (\Phi G)^2 - \Phi G \Bigg(\frac{C_3}{\rho^2} t_* - t_* \Bigg[2 \vert \nabla v \vert \sqrt{\frac{\Phi \theta (1-\theta)}{d}} - \frac{C_1}{2\rho} \sqrt{\frac{d}{\theta (1-\theta)}}\;\Bigg]^2 \\ & \qquad \qquad + \frac{d C_1^2}{4\rho^2 \theta (1-\theta)} t_*+ \Phi \Bigg) \\ &\geq  \frac{2(1-\theta)}{d} (\Phi G)^2 - \Phi G \Big(\frac{C_3}{\rho^2} t_* + \frac{d C_1^2}{4\rho^2 \theta (1-\theta)} t_* + 1 \Big),
\end{align*}
which is equivalent to
\begin{align*}
\Phi G &\leq \frac{d}{2(1-\theta)} \Big(\frac{C_3}{\rho^2} t_* + \frac{d C_1^2}{4\rho^2 \theta (1-\theta)} t_* + 1 \Big) \\ &= \frac{d}{2(1-\theta)} +\frac{d}{2(1-\theta) \rho^2} \Big(C_3 + \frac{d C_1^2}{4} \frac{1}{\theta(1-\theta)}\Big)t_* \\ &\leq \frac{d}{2(1-\theta)} +\frac{C}{2(1-\theta) \rho^2} \Big(1 + \frac{1}{\theta(1-\theta)}\Big)T
\end{align*}
with $C=\max \big\{d C_3,\frac{d^2 C_1^2}{4} \big\}$. Finally, for $(x,y) \in B_{ \rho} (z) \times Y$ we conclude that
\begin{align*}
(1-\theta) \big( \vert \nabla v \vert^2 &(T,x,y) + \Psi_\Upsilon (v) (T,x,y) \big) - \partial_t v (T,x,y) = \frac{1}{T} G(T,x,y) = \frac{1}{T} (\Phi G) (T,x,y) \\ &\leq \frac{1}{T} (\Phi G) (t_*,x_*,y_*) \leq \frac{d}{2(1-\theta)T} +\frac{C}{2(1-\theta) \rho^2} \Big(1 + \frac{1}{\theta(1-\theta)}\Big).
\end{align*}
The statement now follows as $T>0$ was chosen arbitrarily.
\end{proof}


For global solutions we obtain the following Li-Yau estimate.

\begin{korollar}
\label{KorollarLY}
Let $u: (0,\infty) \times \R^n \times Y \to (0,\infty)$ be a smooth solution to $\partial_t u - \mathcal{L} u= 0$ on $(0,\infty) \times \R^n \times Y$. Assume that there exists some $d\in [1,\infty)$ such that the operator $\mathcal{L}$ satisfies $CD_{hyb} (0,d)$. Then the function $v = \log u$ satisfies
\begin{equation}
- \mathcal{L} v = \big( \vert \nabla v \vert^2 + \Psi_\Upsilon (v)\big) - \partial_t v \leq \frac{d}{2t}\;\; \text{ on } (0,\infty) \times \R^n \times Y.
\label{LYInequGlob}
\end{equation}
\end{korollar}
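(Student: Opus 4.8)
The plan is to deduce the global estimate \eqref{LYInequGlob} from the local one in Theorem \ref{SatzLY} by a two-fold passage to the limit, crucially exploiting that the constant $C$ appearing there is independent of the solution, the radius $\rho$, the centre $z$ and the parameter $\theta$.

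First I would fix an arbitrary point $(t,x,y) \in (0,\infty) \times \R^n \times Y$ and a parameter $\theta \in (0,1)$. Since $u$ is in particular a smooth solution of $\partial_t u - \mathcal{L} u = 0$ on $(0,\infty) \times B_{3\rho}(x) \times Y$ for every $\rho > 0$, Theorem \ref{SatzLY} (applied with $z = x$) yields, at the point $(t,x,y) \in (0,\infty) \times B_\rho(x) \times Y$,
\[
(1-\theta)\big(\vert \nabla v \vert^2 + \Psi_\Upsilon (v)\big) - \partial_t v \leq \frac{d}{2t(1-\theta)} + \frac{C}{2(1-\theta)\rho^2}\Big(1 + \frac{1}{\theta(1-\theta)}\Big).
\]
As the left-hand side does not depend on $\rho$ and $C$ is uniform in $\rho$, letting $\rho \to \infty$ makes the last summand disappear, so that
\[
(1-\theta)\big(\vert \nabla v \vert^2 + \Psi_\Upsilon (v)\big) - \partial_t v \leq \frac{d}{2t(1-\theta)}.
\]

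Next I would let $\theta \to 0^+$. Because $u$ is smooth and positive and $Y$ is finite, the quantities $\vert\nabla v\vert^2(t,x,y)$, $\Psi_\Upsilon(v)(t,x,y)$ and $\partial_t v(t,x,y)$ are all finite, so the left-hand side converges to $\big(\vert\nabla v\vert^2 + \Psi_\Upsilon(v)\big) - \partial_t v$ while the right-hand side converges to $\frac{d}{2t}$. Hence $\big(\vert\nabla v\vert^2 + \Psi_\Upsilon(v)\big) - \partial_t v \leq \frac{d}{2t}$ at $(t,x,y)$; since this point was arbitrary, the inequality holds on all of $(0,\infty) \times \R^n \times Y$. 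Finally, Lemma \ref{LemmaEvolEqu} gives $-\mathcal{L} v = \big(\vert\nabla v\vert^2 + \Psi_\Upsilon(v)\big) - \partial_t v$, which turns this into \eqref{LYInequGlob}.

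I do not expect a genuine obstacle here: the argument is a routine limiting procedure, and the only point that requires care is that the constant $C$ in Theorem \ref{SatzLY} is truly uniform in both $\rho$ and $\theta$ — which is precisely what was asserted there — so that the two limits may be taken successively without interfering with one another.
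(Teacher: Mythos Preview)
Your proposal is correct and follows essentially the same route as the paper's proof: apply Theorem \ref{SatzLY}, send $\rho \to \infty$ first and then $\theta \to 0$, and finally invoke Lemma \ref{LemmaEvolEqu} to rewrite the left-hand side as $-\mathcal{L}v$.
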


\begin{proof}
This follows directly from Theorem \ref{SatzLY} by letting first $\rho \to \infty$ and then $\theta \to 0$ and applying Lemma \ref{LemmaEvolEqu}.
\end{proof}

\section{Harnack inequality}
\label{SecHarnack}

The Li-Yau inequalities from the previous section yield pointwise Harnack estimates. This can be shown by combining the integration arguments from the continuous case (see e.g. \cite[Corollary 12.3]{Li}) and the discrete case (see e.g. \cite[Theorem 6.1]{DKZ}). The basic idea is to integrate the differential Harnack inequality along paths, where 'integration' w.r.t.\ the discrete part has to be interpreted in 
an appropriate sense.  

We point out that in the following theorem the finiteness of $Y$ is not
required but the graph is assumed to be locally finite and connected, the latter meaning that
for any $y_1, y_2 \in Y$ there exists a sequence $(z_i)_{i = 0, \dots,l}$ such that $z_0 = y_1$, $z_l = x_2$ and $k(z_i,z_{i+1}) >0$ for all $i \in \{0, \dots,l-1\}$. 
In this situation, for $y_1,y_2\in Y$ by $\dist(y_1,y_2)$ we denote the length of the shortest path connecting $y_1$ and $y_2$.
Here by the length of a path we mean the total number of involved edges. 

\begin{satz}
\label{SatzHarnack}
Let $Y$ be a finite or countably infinite set and $k: Y\times Y\rightarrow [0,\infty)$ be such that the underlying graph is locally finite and connected and that we have $k(y_1,y_2)>0$ if and only if 
$k(y_2,y_1)>0$, for all $y_1,y_2\in Y$. Assume further that there exists
$k_{\min}>0$ such that $k(y_1,y_2)\ge k_{\min}$ for all $y_1,y_2\in Y$ with $k(y_1,y_2)>0$.
Let $\rho>0$ and $z \in \R^n$. Let $u$ be a positive smooth function on $(0,\infty) \times B_{\rho} (z) \times Y$ and assume that there exist constants $C,d >0$ independent
of $\rho, z, u$ and $\theta\in (0,1)$ such that $u$ satisfies the differential Harnack estimate  
\begin{equation}
(1-\theta) \big(\vert \nabla \log u \vert^2 + \Psi_\Upsilon ( \log u)\big) - \partial_t \log u \leq \frac{d}{2t(1-\theta)} + \frac{C}{2(1-\theta)\rho^2} \Big(1+\frac{1}{\theta(1-\theta)}\Big)
\label{LiYauEquHarnack}
\end{equation}
on $(0,\infty) \times B_{\rho}(z)\times Y$ for all $\theta \in (0,1)$. Then for any $x_1,x_2 \in B_{ \rho}(z)$, any $y_1,y_2\in Y$ and any $0 < t_1 < t_2 < \infty$ there holds
\begin{align}
u(t_1,x_1,y_1) \leq u(t_2,x_2,y_2) &\Big(\frac{t_2}{t_1}\Big)^\frac{d}{2(1-\theta)} \exp \Big( \frac{\vert x_2 - x_1 \vert^2}{4(1-\theta) (t_2-t_1)} + 2 \frac{\dist (y_1,y_2)^2}{(1-\theta) k_{\min} (t_2-t_1)} \notag \\ &+\frac{C (t_2-t_1)}{2(1-\theta)\rho^2} \big(1+\frac{1}{\theta (1-\theta)}\big) \Big).
\label{Harnackinequ}
\end{align} 
\end{satz}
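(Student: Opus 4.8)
The plan is to derive the Harnack estimate by integrating the differential Harnack inequality \eqref{LiYauEquHarnack} along a carefully chosen path in space-time, treating the continuous variable $x$ and the discrete variable $y$ separately but simultaneously. Fix $x_1,x_2\in B_\rho(z)$, $y_1,y_2\in Y$ and $0<t_1<t_2<\infty$. First I would choose a shortest path $y_1 = w_0, w_1, \dots, w_l = y_2$ in the graph realising $\dist(y_1,y_2) = l$, so that $k(w_{j-1},w_j)\ge k_{\min}>0$ for each $j$. I would then partition the time interval $[t_1,t_2]$ into $l+1$ subintervals (one ``continuous'' piece where $x$ moves from $x_1$ to $x_2$ along the straight segment while $y$ is held fixed, and $l$ ``discrete'' pieces where $x$ is held fixed while we jump from $w_{j-1}$ to $w_j$), with the lengths of the subintervals to be optimised at the end. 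On each subinterval I would write $\log u(t_2,x_2,y_2) - \log u(t_1,x_1,y_1)$ as a telescoping sum of increments, each of which is controlled using \eqref{LiYauEquHarnack}.

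For a \emph{continuous} segment, along $s\mapsto (t(s),\xi(s),y)$ with $\xi$ linear in time one computes
\[
\frac{d}{ds}\log u = \partial_t\log u + \langle \nabla\log u,\dot\xi\rangle,
\]
and bounding $\langle\nabla\log u,\dot\xi\rangle \le |\nabla\log u|\,|\dot\xi| \le (1-\theta)|\nabla\log u|^2 + \tfrac{|\dot\xi|^2}{4(1-\theta)}$ together with \eqref{LiYauEquHarnack} (after dropping the nonnegative term $(1-\theta)\Psi_\Upsilon(\log u)$) gives an upper bound for $-\frac{d}{ds}\log u$ involving $\frac{d}{2t(1-\theta)}$, the parabolic error term $\frac{C}{2(1-\theta)\rho^2}(1+\frac{1}{\theta(1-\theta)})$, and $\frac{|\dot\xi|^2}{4(1-\theta)}$; integrating produces the $(t_2/t_1)^{d/2(1-\theta)}$-type factor, the Gaussian factor $\exp\big(\frac{|x_2-x_1|^2}{4(1-\theta)(t_2-t_1)}\big)$ and the $\rho$-error term. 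For a \emph{discrete} segment, along which only $y$ jumps from $w_{j-1}$ to $w_j$ with $x$ fixed, I would use that on the two-point restriction $-\partial_t\log u \le \frac{d}{2t(1-\theta)} + (\text{error}) + \partial_t\log u \cdot 0 - (1-\theta)\Psi_\Upsilon(\log u)$, and crucially estimate the discrete increment: writing $a = \log u(t,x,w_j) - \log u(t,x,w_{j-1})$, one has $\Psi_\Upsilon(\log u)(t,x,w_{j-1}) \ge k_{\min}\,\Upsilon(a) = k_{\min}(e^a - 1 - a)$, and one bounds the ``jump cost'' $-a$ by feeding this into the elementary inequality $-a \le \varepsilon\,\Upsilon(a) + (\text{function of }\varepsilon)$ (obtained from $e^a-1-a \ge $ a suitable lower bound, or equivalently from Young-type estimates for $\Upsilon$ and its Legendre dual); optimising over the subinterval lengths and over the splitting parameter yields the term $2\frac{\dist(y_1,y_2)^2}{(1-\theta)k_{\min}(t_2-t_1)}$ in the exponent. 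Summing the continuous and all $l$ discrete contributions and exponentiating gives \eqref{Harnackinequ}.

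The main obstacle I expect is the discrete part: unlike the continuous case where the chain rule makes the integration along $\xi(s)$ transparent, here ``moving in $y$'' is not a smooth flow, so one must replace the path integral of $|\dot\xi|^2$ by a discrete sum of single-edge jump costs, each estimated via the convex function $\Upsilon$. The key technical lemma is the right elementary inequality relating $-a$ to $\Upsilon(a)$ with a free scaling parameter --- something like $-ca \le \Upsilon(a)/\lambda + \Upsilon^*(c\lambda)/\lambda$ where $\Upsilon^*$ is the convex conjugate of $\Upsilon$, so that the coefficient of $\dist(y_1,y_2)^2$ comes out with the sharp constant $2$ after optimising the time-allocation among the $l$ discrete segments (by Cauchy--Schwarz / Jensen, equal allocation is optimal, giving a factor $l^2$ that combines with the per-segment $1/k_{\min}$). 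Care is also needed to track that the parabolic error term $\frac{C}{2(1-\theta)\rho^2}(1+\frac{1}{\theta(1-\theta)})$ is only multiplied by the \emph{total} elapsed time $t_2-t_1$ (not by $l$ times it), which works because this term is constant in $t$ and the time subintervals sum to $t_2-t_1$; the remaining bookkeeping --- combining $\big(\frac{t_2}{t_1}\big)$-powers across subintervals using $\prod (\tau_{k}/\tau_{k-1})^{d/2(1-\theta)} = (t_2/t_1)^{d/2(1-\theta)}$ for a nested partition --- is routine.
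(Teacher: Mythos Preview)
Your overall strategy of integrating \eqref{LiYauEquHarnack} along a space--time path is correct, and the bookkeeping of the $(t_2/t_1)^{d/2(1-\theta)}$ factor and the $\rho$-error term is fine. However, your proposed decomposition into $l+1$ \emph{disjoint} subintervals --- one where only $x$ moves and $l$ where only $y$ jumps --- does not yield the stated inequality \eqref{Harnackinequ}. If you allocate time $T_c$ to the continuous piece and $T_d=(t_2-t_1)-T_c$ to the discrete pieces (equally split among the $l$ jumps), the exponent contains
\[
\frac{|x_2-x_1|^2}{4(1-\theta)\,T_c}\;+\;\frac{2\,\dist(y_1,y_2)^2}{(1-\theta)\,k_{\min}\,T_d},
\]
and optimising over $T_c+T_d=t_2-t_1$ yields $(\sqrt{A}+\sqrt{B})^2/(t_2-t_1)$ rather than the required $(A+B)/(t_2-t_1)$, where $A=\frac{|x_2-x_1|^2}{4(1-\theta)}$ and $B=\frac{2\dist(y_1,y_2)^2}{(1-\theta)k_{\min}}$. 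The excess $2\sqrt{AB}/(t_2-t_1)$ is strictly positive whenever both $x$ and $y$ must move. The missing idea is that the two nonnegative terms $(1-\theta)|\nabla v|^2$ and $(1-\theta)\Psi_\Upsilon(v)$ on the left of \eqref{LiYauEquHarnack} can be exploited on \emph{overlapping} time intervals: in the paper, $x$ runs along the straight line $\gamma(t)$ over the \emph{entire} interval $[t_1,t_2]$, and the discrete jump (for neighbouring $y_1,y_2$) is inserted at a single time $s\in[t_1,t_2]$. The $|\nabla v|^2$-term is then consumed by the Cauchy--Schwarz/square-completion step on all of $[t_1,t_2]$, while the $\Psi_\Upsilon$-term on $[s,t_2]$ controls the jump. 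For $\dist(y_1,y_2)=l>1$ one partitions $[t_1,t_2]$ into $l$ equal pieces and applies the single-edge estimate on each, with $\gamma$ moving on each piece; since $\gamma$ is a straight line with constant speed, the $l$ Gaussian pieces telescope back to $\frac{|x_2-x_1|^2}{4(1-\theta)(t_2-t_1)}$ exactly.

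Your handling of the single-edge jump via a pointwise Fenchel--Young inequality for $\Upsilon$ is also not the mechanism that produces the constant $2$. The paper (following \cite{DKZ}) writes the residual discrete contribution as a function of the \emph{free} jump time,
\[
\omega(s)=\delta(s)-(1-\theta)\,k_{\min}\int_s^{t_2}\Upsilon\big(\delta(\tau)\big)\,d\tau,\qquad \delta(t)=\log\frac{u(t,\gamma(t),y_1)}{u(t,\gamma(t),y_2)},
\]
and then chooses $s\in[t_1,t_2]$ at which $\omega$ is minimal; a short argument from \cite{DKZ} gives $\min_s\omega(s)\le \frac{2}{(1-\theta)k_{\min}(t_2-t_1)}$. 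The point is that $\delta(s)$ and the integral are not bounded separately: the freedom in $s$ plays them off against each other, and this is precisely what recovers the full interval length $t_2-t_1$ in the denominator of the discrete term.
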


\begin{bemerkung}
The right-hand side of \eqref{LiYauEquHarnack} coincides with the right-hand side in the Li-Yau
estimate in Theorem \ref{SatzLY}. Clearly, Theorem \ref{SatzHarnack} can also be formulated with a more general right-hand side in \eqref{LiYauEquHarnack}, resulting in a corresponding right-hand side in the Harnack inequality.
\end{bemerkung}

\begin{proof}
Let $0<t_1<t_2$, $x_1,x_2 \in B_{\rho}(z), y_1,y_2 \in Y$. Let $\gamma: [t_1,t_2] \to B_{\rho} (z), \ \gamma (t) = x_2 \frac{t-t_1}{t_2-t_1} +x_1 \frac{t_2-t}{t_2-t_1}$. Then $\gamma(t_1) = x_1$, $\gamma (t_2) = x_2$ and $\gamma^\prime (t) = \frac{x_2-x_1}{t_2-t_1}$ for all $t\in [t_1,t_2]$. For $s \in [t_1,t_2]$ we have
\begin{align*}
\log &\frac{u(t_1,x_1,y_1)}{u(t_2,x_2,y_2)} = \log \frac{u(t_1,x_1,y_1)}{u(s,\gamma(s),y_1)} + \log \frac{u(s,\gamma(s),y_1)}{u(s,\gamma(s),y_2)} +\log \frac{u(s,\gamma (s) ,y_2)}{u(t_2,x_2,y_2)} \\ &= \int_s^{t_1} \frac{d}{dt} \big( \log u(t,\gamma(t),y_1)\big) dt +\int_{t_2}^{s} \frac{d}{dt} \big( \log u(t,\gamma(t),y_2)\big) dt+ \log \frac{u(s,\gamma(s),y_1)}{u(s,\gamma(s),y_2)} \\ &= \int_s^{t_1} \Big(\partial_t \log u(t,\gamma(t),y_1) + \langle \nabla \log u(t,\gamma(t),y_1), \gamma^\prime (t)\rangle \Big) dt \\ &\qquad + \int_{t_2}^{s} \Big(\partial_t \log u(t,\gamma(t),y_2) + \langle \nabla \log u(t,\gamma(t),y_2), \gamma^\prime (t)\rangle \Big) dt + \log \frac{u(s,\gamma(s),y_1)}{u(s,\gamma(s),y_2)}. 
\end{align*}
To shorten the notation, we also write $v = \log u$ in the sequel. Using (\ref{LiYauEquHarnack}) and Cauchy-Schwarz we obtain that
\begin{align*}
&\log \frac{u(t_1,x_1,y_1)}{u(t_2,x_2,y_2)} \leq \int_{t_1}^{s} \Big[ \frac{d}{2(1-\theta)t} + \frac{C}{2(1-\theta) \rho^2} \Big(1+\frac{1}{\theta(1-\theta)}\Big) \\& 
\qquad\qquad \qquad  -(1-\theta) \Big(\vert \nabla v \vert^2 (t,\gamma(t),y_1) + \Psi_\Upsilon (v)(t,\gamma(t),y_1)\Big) \Big]dt \\ 
&\qquad\qquad\qquad+\int_{s}^{t_2} \Big[ \frac{d}{2(1-\theta)t} + \frac{C}{2(1-\theta) \rho^2} \Big(1+\frac{1}{\theta(1-\theta)}\Big) \\
 &\qquad \qquad\qquad -(1-\theta) \Big(\vert \nabla v \vert^2 (t,\gamma(t),y_2) + \Psi_\Upsilon (v)(t,\gamma(t),y_2)\Big) \Big]dt \\ &\qquad - \int_{t_1}^s \langle \nabla v(t,\gamma(t),y_1), \gamma^\prime (t)\rangle dt - \int_{s}^{t_2} \langle \nabla v(t,\gamma(t),y_2), \gamma^\prime (t)\rangle dt + \log \frac{u(s,\gamma(s),y_1)}{u(s,\gamma(s),y_2)} \\ &\leq \int_{t_1}^{t_2} \frac{d}{2(1-\theta)t} dt + \frac{C (t_2-t_1)}{2(1-\theta) \rho^2} \Big(1+\frac{1}{\theta(1-\theta)}\Big) + \log \frac{u(s,\gamma(s),y_1)}{u(s,\gamma(s),y_2)} \\ & \qquad + \int_{t_1}^s \Big(\vert \nabla v \vert (t,\gamma(t),y_1) \vert \gamma^\prime (t) \vert - (1-\theta) \vert \nabla v \vert^2 (t,\gamma(t),y_1) \Big) dt\\
& \qquad -(1-\theta) \int_{t_1}^s \Psi_\Upsilon (v)(t,\gamma(t),y_1) dt \\ & \qquad + \int_{s}^{t_2} \Big( \vert \nabla v \vert (t,\gamma(t),y_2) \vert \gamma^\prime (t) \vert - (1-\theta) \vert \nabla v \vert^2 (t,\gamma(t),y_2) \Big) dt\\
&\qquad -(1-\theta) \int_{s}^{t_2} \Psi_\Upsilon (v)(t,\gamma(t),y_2) dt.
\end{align*}

Now, for any $t\in [t_1,t_2]$ and any $(x,y) \in \R^n \times Y$, denoting $\vert \nabla v \vert = \vert \nabla v (t,x,y) \vert$, we have
\begin{align*}
\vert \nabla v \vert \vert \gamma^\prime (t) \vert - (1-\theta) \vert \nabla v \vert^2 &= -\Big( \sqrt{1-\theta} \vert \nabla v \vert - \frac{1}{2 \sqrt{1-\theta}} \vert \gamma^\prime (t) \vert\Big)^2 + \frac{1}{4(1-\theta)} \vert \gamma^\prime (t) \vert^2 \\ &\leq \frac{1}{4(1-\theta)} \vert \gamma^\prime (t) \vert^2
\end{align*}
and thus
\begin{align*}
&\int_{t_1}^s  \Big(\vert \nabla v \vert (t,\gamma(t),y_1) \vert \gamma^\prime (t) \vert - (1-\theta) \vert \nabla v \vert^2 (t,\gamma(t),y_1) \Big) dt \\ &  + \int_{s}^{t_2} \Big( \vert \nabla v \vert (t,\gamma(t),y_2) \vert \gamma^\prime (t) \vert - (1-\theta) \vert \nabla v \vert^2 (t,\gamma(t),y_2) \Big) dt \leq \int_{t_1}^{t_2} \frac{1}{4(1-\theta)} \vert \gamma^\prime (t) \vert^2 dt.
\end{align*}

Turning to the remaining terms, note first that in the case $y_1=y_2$ we trivially have 
\begin{align*}
\log &\frac{u(s,\gamma(s),y_1)}{u(s,\gamma(s),y_2)} -(1-\theta) \int_{t_1}^s \Psi_\Upsilon (v)(t,\gamma(t),y_1) dt -(1-\theta) \int_{s}^{t_2} \Psi_\Upsilon (v)(t,\gamma(t),y_2) dt \leq 0,
\end{align*}
and thus the assertion follows by calculating the integrals in the same way as we do below.

Consider next the case in which $k(y_1,y_2) >0$, that is $y_1$ and $y_2$ are neighbours. We now follow the line of arguments from the proof of \cite[Theorem 6.1]{DKZ} and estimate
\begin{align*}
\log &\frac{u(s,\gamma(s),y_1)}{u(s,\gamma(s),y_2)} -(1-\theta) \int_{t_1}^s \Psi_\Upsilon (v)(t,\gamma(t),y_1) dt -(1-\theta) \int_{s}^{t_2} \Psi_\Upsilon (v)(t,\gamma(t),y_2) dt \\ &\leq \log \frac{u(s,\gamma(s),y_1)}{u(s,\gamma(s),y_2)} -(1-\theta) k(y_2,y_1) \int_{s}^{t_2} \Upsilon \big(v(t,\gamma(t),y_1)-v(t,\gamma(t),y_2)\big) dt \\ &\leq \delta(s) - (1-\theta) k_{\min} \int_s^{t_2} \Upsilon \big( \delta (t)\big) dt
\end{align*}
with $\displaystyle \delta(t) = \log \frac{u(t,\gamma(t),y_1)}{u(t,\gamma(t),y_2)}$. Choosing $s \in [t_1,t_2]$ in such a way that the continuous function $\omega$ defined by
\begin{align*}
\omega (t) = \delta (t) - (1-\theta) k_{\min} \int_t^{t_2} \Upsilon \big( \delta (\tau)\big) d\tau,\quad t\in [t_1,t_2],
\end{align*}
attains its minimum at $s$ and arguing as in the proof of \cite[Theorem 6.1]{DKZ} we obtain that 
\begin{align*}
\omega (s) \leq \frac{2}{(1-\theta)k_{\min} (t_2-t_1)}.
\end{align*}

Summarizing the previous estimates, we see that
\begin{align*}
\log \frac{u(t_1,x_1,y_1)}{u(t_2,x_2,y_2)}  &\leq \int_{t_1}^{t_2} \frac{d}{2(1-\theta)t} dt + \frac{C (t_2-t_1)}{2(1-\theta) \rho^2} \big(1+\frac{1}{\theta(1-\theta)}\big) \\ &\qquad  + \int_{t_1}^{t_2} \frac{1}{4(1-\theta)} \vert \gamma^\prime (t) \vert^2 dt + \frac{2}{(1-\theta)k_{\min} (t_2-t_1)}.
\end{align*}

Let now $y_1,y_2 \in Y$ be arbitrary with $l := \dist (y_1,y_2)>1$ and let $(z_i)_{i=0,1,\dots,l}$ be a minimal sequence connecting $y_1$ and $y_2$, i.e. $z_0=y_1$, $z_l = y_2$ and $k(z_i,z_{i+1}) >0$ for all $i \in \{0,\dots,l-1\}$. Let further $(\tau_i)_{i=0,\dots,l}$ be defined by $\tau_i = t_1+i \frac{t_2-t_1}{l}$. Then, as all points $\big(\tau_i, \gamma (\tau_i)\big)$ lie on the straight line connecting $(t_1,x_1)$ and $(t_2,x_2)$, we may infer that
\begin{align*}
\log \frac{u(t_1,x_1,y_1)}{u(t_2,x_2,y_2)} &= \sum_{i=0}^{l-1} \log \frac{u(\tau_i,\gamma(\tau_i),z_i)}{u(\tau_{i+1},\gamma (\tau_{i+1}),z_{i+1})} \\ &\leq \int_{t_1}^{t_2} \frac{d}{2(1-\theta)t} dt + \sum_{i=0}^{l-1} \frac{C (\tau_{i+1}-\tau_i)}{2(1-\theta) \rho^2} \big(1+\frac{1}{\theta(1-\theta)}\big) \\ &\qquad  + \int_{t_1}^{t_2} \frac{1}{4(1-\theta)} \vert \gamma^\prime (t) \vert^2 dt + \sum_{i=0}^{l-1} \frac{2}{(1-\theta)k_{\min} (\tau_{i+1}-\tau_i)} \\ &= \log \big(\frac{t_2}{t_1}\big)^{\frac{d}{2(1-\theta)}} + \frac{C (t_2-t_1)}{2(1-\theta) \rho^2} \big(1+\frac{1}{\theta(1-\theta)}\big) \\ &\qquad + \frac{\vert x_2-x_1 \vert^2}{4(1-\theta) (t_2-t_1)} + \frac{2 l^2}{(1-\theta) k_{\min} (t_2-t_1)},
\end{align*}
from which (\ref{Harnackinequ}) follows by applying the exponential function on both sides.
\end{proof}

\begin{korollar}
\label{KorollarHarnack}
Let $Y, k$ and $k_{\min}$ be as in Theorem \ref{SatzHarnack} and $u: (0,\infty) \times \R^n \times Y \to (0,\infty)$ be a smooth solution to the equation $\partial_t u - \mathcal{L} u= 0$ on $(0,\infty) \times \R^n \times Y$. Suppose that $\mathcal{L}$ satisfies $CD_{hyb} (0,d)$ with some $d\in [1,\infty)$. Then for any $x_1,x_2 \in \R^n$,
any $y_1,y_2\in Y$ and any $0 < t_1 < t_2 < \infty$ there holds
\begin{align}
u(t_1,x_1,y_1) \leq u(t_2,x_2,y_2) \Big(\frac{t_2}{t_1}\Big)^\frac{d}{2} \exp \Big( \frac{\vert x_2 - x_1 \vert^2}{4 (t_2-t_1)} + 2 \frac{\dist (y_1,y_2)^2}{k_{\min} (t_2-t_1)}\Big).
\label{HarnackGlob}
\end{align} 
\end{korollar}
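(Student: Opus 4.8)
The plan is to obtain the global Harnack estimate \eqref{HarnackGlob} from the local Harnack inequality of Theorem \ref{SatzHarnack} by a two-step limiting argument: first send the ball radius to infinity, then send the auxiliary parameter $\theta$ to $0$. The only ingredient needed to activate Theorem \ref{SatzHarnack} is the differential Harnack (Li-Yau) estimate \eqref{LiYauEquHarnack}, and for global solutions this is precisely what Theorem \ref{SatzLY} delivers on every ball, with a constant that does not depend on the radius.

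In detail, I would fix $x_1,x_2\in\R^n$, $y_1,y_2\in Y$ and $0<t_1<t_2<\infty$, and choose $z\in\R^n$ and $\rho>0$ large enough that $x_1,x_2\in B_\rho(z)$ (e.g.\ $z$ the midpoint of $x_1,x_2$ and $\rho>\tfrac{1}{2}|x_1-x_2|$). Since the restriction of $u$ to $(0,\infty)\times B_{3\rho}(z)\times Y$ is still a positive smooth solution of $\partial_t u-\mathcal{L}u=0$ and $\mathcal{L}$ satisfies $CD_{hyb}(0,d)$, Theorem \ref{SatzLY} gives estimate \eqref{LYInequ} on $(0,\infty)\times B_\rho(z)\times Y$ for every $\theta\in(0,1)$, with a constant $C>0$ independent of $u,\rho,z,\theta$ (when $Y$ is finite this is immediate; for the countably infinite graphs admitted in the corollary one uses the corresponding local Li-Yau estimate, which holds under the standing hypotheses on $k$). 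This is exactly hypothesis \eqref{LiYauEquHarnack} of Theorem \ref{SatzHarnack}, whose remaining assumptions on $k$, $k_{\min}$, local finiteness and connectedness are those imposed in the corollary. Applying Theorem \ref{SatzHarnack} on $B_\rho(z)$ therefore yields \eqref{Harnackinequ}.

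It remains to pass to the limit. Since $C$ does not depend on $\rho$, letting $\rho\to\infty$ in \eqref{Harnackinequ} makes the summand $\frac{C(t_2-t_1)}{2(1-\theta)\rho^2}\big(1+\frac{1}{\theta(1-\theta)}\big)$ inside the exponential vanish, leaving
\[
u(t_1,x_1,y_1)\le u(t_2,x_2,y_2)\Big(\frac{t_2}{t_1}\Big)^{\frac{d}{2(1-\theta)}}\exp\Big(\frac{|x_2-x_1|^2}{4(1-\theta)(t_2-t_1)}+\frac{2\,\dist(y_1,y_2)^2}{(1-\theta)\,k_{\min}\,(t_2-t_1)}\Big)
\]
for every $\theta\in(0,1)$; connectedness and local finiteness ensure $\dist(y_1,y_2)<\infty$, so the right-hand side is finite. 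Finally, sending $\theta\to0^{+}$ and using continuity of each exponent and prefactor in $\theta$ (so that $\frac{d}{2(1-\theta)}\to\frac{d}{2}$, $\frac{1}{4(1-\theta)}\to\frac{1}{4}$ and $\frac{2}{(1-\theta)k_{\min}}\to\frac{2}{k_{\min}}$) produces \eqref{HarnackGlob}.

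I do not anticipate a genuine obstacle here: the argument is a clean limit, and the substantive content lies in Theorems \ref{SatzLY} and \ref{SatzHarnack}. The one structural point worth stressing is that the $\rho$-independence (and $\theta$-independence) of the constant $C$ is already built into Theorem \ref{SatzLY}; this is what licenses sending the radius to infinity \emph{before} optimising in $\theta$, which is essential since the $\rho^{-2}$ term carries the factor $1+\frac{1}{\theta(1-\theta)}$ that blows up as $\theta\to0$.
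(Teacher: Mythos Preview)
Your proposal is correct and follows exactly the paper's own argument: invoke Theorem \ref{SatzLY} to verify hypothesis \eqref{LiYauEquHarnack}, apply Theorem \ref{SatzHarnack}, then send $\rho\to\infty$ before $\theta\to 0$. The paper's proof is a single sentence stating precisely this; you have simply spelled out the details, including the useful observation about why the order of limits matters.
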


\begin{proof}
This follows from Theorem \ref{SatzHarnack} by letting first $\rho \to \infty$ and then $\theta \to 0$ having in mind that Theorem \ref{SatzLY} ensures the validity of (\ref{LiYauEquHarnack}) as $\mathcal{L}$ satisfies $CD_{hyb}(0,d)$.
\end{proof}

\begin{bemerkung}
Our Harnack inequality from (\ref{HarnackGlob}) is closely related to both, the continuous and the discrete one for solutions to the respective version of the heat equation. To be more concrete, when $y_1=y_2$, equation (\ref{HarnackGlob}) gives the Harnack inequality from \cite[Corollary 12.5]{Li} (with increased dimension exponent $d$ instead of $n$) and in the case $x_1=x_2$ we obtain a version of \cite[Theorem 6.1]{DKZ} in the purely discrete case, again with increased dimension parameter. 
%
\end{bemerkung}

\begin{beispiel} \label{Beispiele}
(i) \emph{2-point graph.} We come back to Example \ref{2-pkteGraph}, where the underlying
discrete space is a 2-point graph with symmetric weight $\eta>0$. In the case $\eta=1$, we know
from Example \ref{BspVollstGraph} that the operator $\mathcal{L} = \Delta \oplus L_d$ satisfies $CD_{hyb} (0,d)$ with $d = n+\frac{2}{C(2)} = n+\frac{1}{C_{\log}} (\approx n+1,258$, see Remark \ref{BemerkungMuench}). This result extends to the general case $\eta>0$, since
a constant weight $\eta$ leads to an additional $\eta^2$-term in both the $\Psi_{2,\Upsilon}(u)$
and the $(L_d(u))^2$ term and thus cancels out, cf.\ the calculation in
Example \ref{2-pkteGraph}. Corollary \ref{KorollarHarnack} now shows that for any smooth 
pair of positive functions $(u_1,u_2)$ which solves \eqref{firstexample} on $(0,\infty)\times \iR^n$, there holds
\begin{align*}
u_i(t_1,x_1) \leq u_j(t_2,x_2) \big(\frac{t_2}{t_1}\big)^{\frac{n}{2}+\frac{1}{2C_{\log}}} \exp \Big(\frac{\vert x_2-x_1 \vert^2}{4(t_2-t_1)}+\frac{2 \dist(y_i,y_j)^2}{\eta(t_2-t_1)}\Big), \ i,j \in \{1,2\},
\end{align*}
for any $0<t_1<t_2<\infty$ and $x_1,x_2 \in \R^n$.
This Harnack inequality includes two single-species estimates; indeed, for $j=i$ we find 
\begin{align*}
u_i (t_1,x_1) \leq u_i (t_2,x_2) \big(\frac{t_2}{t_1}\big)^{\frac{n}{2}+\frac{1}{2C_{\log}}} \exp \Big(\frac{\vert x_2-x_1 \vert^2}{4(t_2-t_1)}\Big), \ i=1,2,\ 0<t_1<t_2,\ x_1,x_2\in \iR^n.
\end{align*}
Setting $x_1=x_2=x$ we further obtain the interesting estimate
\begin{align*}
u_i (t_1,x) \leq u_j (t_2,x) \big(\frac{t_2}{t_1}\big)^{\frac{n}{2}+\frac{1}{2C_{\log}}} \exp \Big(\frac{2}{\eta(t_2-t_1)}\Big), \ i \neq j,\ 0<t_1<t_2,\ x\in \iR^n.
\end{align*}

(ii) \emph{The square.} Let $u_i$, $i \in \{1,2,3,4\}$, be smooth positive functions which solve the system
\begin{equation} \label{squareExam}
\left\{
\begin{array}{r@{\;=\;}l@{\;}l}
\partial_t u_1 (t,x) - \Delta u_1 (t,x) & u_2(t,x)+u_4(t,x) -2 u_1 (t,x),\quad & t>0,\,x\in \iR^n, \\ 
\partial_t u_2 (t,x) - \Delta u_2 (t,x) & u_1(t,x)+u_3(t,x) -2u_2 (t,x),\quad & t>0,\,x\in \iR^n, \\ 
\partial_t u_3 (t,x) - \Delta u_3 (t,x) & u_2(t,x)+u_4(t,x) -2u_3 (t,x),\quad & t>0,\,x\in \iR^n, \\ 
 \partial_t u_4 (t,x) - \Delta u_4 (t,x) & u_1(t,x)+u_3(t,x) -2u_4 (t,x),\quad & t>0,\,x\in \iR^n.
\end{array}
\right.
\end{equation}
Letting $Y= \{y_1,\dots,y_4\}$ be a four-element set and setting $u(t,x,y_i)=u_i(t,x)$ for $i=1,\dots,4$, the system \eqref{squareExam} is equivalent to 
\begin{align*}
\partial_t u(t,x,y)-\Delta u(t,x,y) -L_d (t,x,y)=0,\quad t\in (0,\infty),\ x\in \iR^n,\ y\in Y,
\end{align*}
where $L_d$ is generated by the kernel $k$ with $k(y_i,y_j) = 1$ if $\vert i-j \vert \in \{1,3\}$ and $k=0$ else. The underlying graph structure is the square that we already mentioned in Example \ref{BeispielRicci} (ii). There we showed that the corresponding operator $\mathcal{L}=\Delta \oplus L_d$ satisfies $CD_{hyb}(0,d)$ with $d=n+\frac{4}{C(2)} = n+\frac{2}{C_{\log}} (\approx n+2,516$, see again Remark \ref{BemerkungMuench}). Thus, Corollary \ref{KorollarHarnack} yields the Harnack inequality
\begin{align*}
u_i(t_1,x_1) \leq u_j (t_2,x_2) \Big(\frac{t_2}{t_1}\Big)^{\frac{n}{2}+\frac{1}{C_{\log}}} \exp \Big( \frac{\vert x_2 - x_1 \vert^2}{4 (t_2-t_1)} + \frac{\dist(y_i,y_j)^2}{ (t_2-t_1)}\Big) 
\end{align*}
for any $0<t_1<t_2<\infty$, $x_1,x_2 \in \R^n$ and $i,j\in \{1,2,3,4\}$.
\end{beispiel}

\section{Open problems}
\label{Section5}
In this section we want to describe several open problems with regard to possible extensions
of our results.

{\em 1. Infinite graphs.} Our results on Li-Yau estimates, Theorem \ref{SatzLY} and Corollary \ref{KorollarLY}, are restricted to finite sets $Y$. To establish similar results for infinite but locally finite graphs seems to be possible but is
more difficult. The problem is that one also needs a cutoff function in the discrete variable. For
solutions which are global w.r.t.\ $y$, one may possibly argue as in \cite[Section 5]{KWZ}.
For solutions which are local w.r.t.\ $y$, the situation is much more challenging due to delicate
commutator estimates, see \cite{Harvard} and \cite{DKZ} for corresponding results in the purely
discrete setting. Another difficulty arises if the kernel $k$ is such that arbitrary long jumps may 
occur meaning that the graph is not locally finite. In \cite{KWZ,SWZ1} it was shown that for certain operators of the form
\begin{equation} \label{genlaplacedef}
L_d v(y)=\,\sum_{j\in \iZ} \tilde{k}(j) \big(v(y+j)-v(y)\big),\quad y\in \iZ,
\end{equation}
with symmetric, nonnegative and summable kernel $\tilde{k}$ without finite second moment (including the fractional 
discrete Laplacian), the condition $CD_\Upsilon(0,d)$ fails to hold for any $d\in [1,\infty)$, whereas a corresponding condition with 
a nonquadratic dimension term (i.e.\ a nonquadratic $CD$-function) is satisfied. To make use of
the latter would require a significant more general version of the hybrid tensorisation principle.

{\em 2. Different diffusion coefficients.} In \eqref{RDSystems}, the diffusion coefficient of each species is equal to one. Clearly, by scaling in time, we could allow also
for a different positive diffusion coefficient, say $d$, but we are not able to treat the case with different diffusion coefficients. 
So, the question is whether it is possible to prove a Li-Yau inequality for more general systems of the form
\begin{equation} \label{genRDSystems}
\partial_t u_i (t,x) - d_i\Delta u_i(t,x) = \sum_{j=1}^m k(i,j)\big(u_j(t,x)-u_i(t,x)\big), 
\quad t>0,\ x\in \iR^n,\ i = 1,\dots,m.
\end{equation}
The main additional difficulty is that the two operators in the sum 
\[
\mathcal{L}u(x,y)=d(y)\Delta u(x,y)+L_d u(x,y)\]
 no longer
commute, due to the variable coefficient $d(y)$. As a consequence, the crucial inequality from Lemma \ref{LemmaCrucInequ} breaks down and thus also the hybrid tensorisation principle is no longer available.

{\em 3. Nonlocal diffusion.} If the Laplace operator $\Delta$ is replaced with a fractional
Laplacian $-(-\Delta)^\beta$ with $\beta\in (0,1)$, then one faces the problem that
no suitable curvature-dimension condition is known for the continuous part. In fact, it was shown
in \cite{SWZ2} that the fractional Laplacian on $\iR^n$ fails to satisfy the condition $CD(0,n)$ for all $n\in [1,\infty)$. Thus our approach based on $CD$-conditions does not seem to be feasible 
in the nonlocal case. On the other hand, it was recently shown in \cite{WZ23} that positive global solutions of the space fractional heat equation do fulfil a Li-Yau inequality, from which one can
also deduce a Harnack inequality. So, there is hope that Li-Yau estimates also hold true 
for \eqref{RDSystems} with $\Delta$ replaced by $-(-\Delta)^\beta$ with $\beta\in (0,1)$.


$\mbox{}$
{\footnotesize

$\mbox{}$


}

\end{document}